\newtheorem{theorem}{Theorem}[section]
\newtheorem{corollary}[theorem]{Corollary}
\newtheorem{definition}[theorem]{Definition}
\newtheorem{lemma}[theorem]{Lemma}
\newtheorem{proposition}[theorem]{Proposition}
\newtheorem{remark}[theorem]{Remark}
\newenvironment{proof}[1][Proof]{\noindent\textbf{#1.} }{\ \rule{0.5em}{0.5em}}
\newcommand{\f}{\mathfrak}
\newcommand{\wnabla}{\widetilde{\nabla}}
\newcommand{\bb}{\mathbb}
\newcommand{\R}{\mathbb{R}}
\newcommand{\C}{\mathbb{C}}
\newcommand{\QK}{\mathcal{QK}}
\newcommand{\HK}{\mathcal{HK}}
\font\frd=eufm10 scaled\magstep3
\newcommand{\Cyclic}[2]{\genfrac{}{}{0pt}{0}{#1}{#2}}
\newcommand{\fdS}{\raisebox{-2.0ex}{\mbox{\frd{S}}}}
\newcommand{\SXYZ}{\raisebox{-0.6ex}{\mbox{\scriptsize{$\mathit{XYZ}$}}}}
\newcommand{\SJXJYJZ}{\raisebox{-0.6ex}{\mbox{\scriptsize{$\mathit{JXJYJZ}$}}}}
\newcommand\T{\rule{0pt}{3.1ex}}
\newcommand\B{\rule[-1.7ex]{0pt}{0pt}}
\author{M. Castrill\'on L\'opez \\
ICMAT (CSIC-UAM-UC3M-UCM)\\
Departamento de Geometr\'\i a y Topolog\'\i a \\
Facultad de Matem\'aticas, Universidad Complutense de Madrid\\
28040 Madrid, Spain \and Ignacio Luj\'an  \\Departamento de Geometr\'\i a y Topolog\'\i a \\
Facultad de Matem\'aticas, Universidad Complutense de Madrid\\
28040 Madrid, Spain}
\date{}
\title{Strongly degenerate homogeneous pseudo-K\"ahler structures of linear type and complex plane waves}
\begin{document}

\author{M. Castrill\'on L\'opez \\
\small{ICMAT (CSIC-UAM-UC3M-UCM)}\\
\small{Departamento de Geometr\'\i a y Topolog\'\i a} \\
\small{Facultad de Matem\'aticas, Universidad Complutense de Madrid}\\
\small{28040 Madrid, Spain} \and Ignacio Luj\'an  \\\small{Departamento de Geometr\'\i a y Topolog\'\i a} \\
\small{Facultad de Matem\'aticas, Universidad Complutense de Madrid}\\
\small{28040 Madrid, Spain}}
\date{}

\title{Strongly degenerate homogeneous pseudo-K\"ahler structures of linear type and complex plane waves}

\maketitle

\abstract{We study the class $\mathcal{K}_{2}+\mathcal{K}_{4}$ of
homogeneous pseudo-K\"ahler structures in the strongly degenerate
case. The local form and the holonomy of a pseudo-K\"ahler
manifold admitting such a structure is obtained, leading to a
possible complex generalization of homogeneous plane waves. The
same question is tackled in the case of pseudo-hyper-K\"ahler and
pseudo-quaternion K\"ahler manifolds.}

\renewcommand{\thefootnote}{\fnsymbol{footnote}}
\footnotetext{\emph{MSC2010:} Primary 53C30, Secondary 53C50,
53C55, 53C80.}
\renewcommand{\thefootnote}{\arabic{footnote}}
\renewcommand{\thefootnote}{\fnsymbol{footnote}}
\footnotetext{\emph{Key words and phrases:} homogeneous plane
waves, pseudo-hyper-K\"ahler, pseudo-\\ K\"ahler,
pseudo-quaternion K\"ahler, reductive homogeneous
pseudo-Riemannian spaces.}
\renewcommand{\thefootnote}{\arabic{footnote}}

\section{Introduction}

Undoubtedly, homogeneous manifolds constitute a distinguished
class of spaces on which the study of (pseudo)-Riemannian geometry
is especially rich and varied. They enjoy a privileged position in
Differential Geometry and have been intensively studied by means
of a fruitful collection of approaches and tools. Among them,
homogeneous structure tensors have proved to be one of the most
successful. These tensors combine their algebraic structure
together with a set of geometric PDEs known as Ambrose-Singer
equations (see \cite{AS}, \cite{TV}). In addition, homogeneous
spaces play an essential role in different contexts of theoretical
Physics as in Field Theories (cf. \cite{Cec}, \cite{dWVP}),
Gravitation (cf. \cite{DK}, \cite{OOF}), etc.

From a geometrical point of view, homogeneous structures have been
able to characterize certain spaces. For definite metrics, purely
Riemannian or with additional geometry (K\"ahler or quaternion
K\"ahler for instance), the so-called homogeneous structures of
linear type characterize negative constant sectional (holomorphic
sectional, quaternionic sectional) curvature (see \cite{CGS0},
\cite{GMM} and \cite{TV} for these results as well as indications
of other similar results). When the case of metrics with signature
is analyzed, the causal nature of the vector fields characterizing
a homogeneous structure of linear type gives rise to different
scenarios with some physical implications. In \cite{Mes} and
\cite{Mon} the purely pseudo-Riemannian case with an isotropic
structure is studied in full detail: More precisely, it is proved
that these spaces have the underlying geometry of a real singular
homogeneous plane wave. The aim of this paper is to extend this
result to the pseudo-K\"ahler, pseudo-hyper-K\"ahler and
pseudo-quaternion K\"ahler settings.

The main characterization of this work gives the geometry of
pseudo-K\"ahler manifolds with a so called strongly degenerate
homogeneous pseudo-K\"ahler structure of linear type. In
particular, the expression of the metric obtained in the
characterization has strong similarities with singular
scale-invariant homogeneous plane waves. Furthermore, the
manifolds under study and these homogeneous plane waves share some
other analogies and features as it is shown in \S \ref{secwave}.
Because of this, since there is not a formal definition of
``complex plane wave'' (as far as the authors know),
pseudo-K\"ahler manifolds with strongly degenerate linear
homogeneous structures seem to be the correct generalization of
this special kind of homogeneous plane waves in complex framework,
at least in the important particular K\"ahler case. In addition,
the same techniques are successfully applied to a comparison of
Cahen-Wallach spaces and one of the possible pseudo-K\"ahler
symmetric spaces of index $2$ in the classification given in
\cite{Gal}, giving a more general picture of complex plane waves.

A relevant fact about these spaces is that they have holonomy
group contained in $SU(p,q)$. Consequently, it is natural to study
strongly degenerate homogeneous pseudo-hyper-K\"ahler and
pseudo-quaternion K\"ahler structures of linear type. However, we
prove that a manifold admitting any of those structures is
necessarily flat, pointing out that the notion of homogeneous
plane wave can not be realized in the pseudo-hyper-k\"ahler or
pseudo-quaternion K\"ahler cases in a non-trivial way.

The paper is organized as follows. In Section $2$ we recall some
results concerning homogeneous pseudo-Riemannian structures and
give some definitions. In Section $3$ the main result of the
article gives the local form of the metric of a pseudo-K\"ahler
manifold admitting a strongly degenerate homogeneous
pseudo-K\"ahler structure of linear type. The holonomy algebra and
some geometric properties are given. In addition, we analyze
$\mathbb{C}^{n+2}$ with the metric above as the local model of
these manifolds. In particular, the geodesic completeness is
studied showing the existence of cosmological singularities. In
Section $4$ we study the homogeneous model $G/H$ associated to
these kind of homogeneous structures. Geodesic completeness is
again analyzed. In Section $5$ we restrict ourselves to the
Lorentz-K\"ahler case (signature $(2,2+2n)$). We exhibit the
relationship between pseudo-K\"ahler manifolds with a strongly
degenerate structure of linear type and one of the possible
pseudo-K\"ahler symmetric spaces of index $2$ on one hand, and
singular scale-invariant homogeneous plane waves and Cahen Wallach
spaces on the other. In Section $6$ the pseudo-hyper-K\"ahler and
pseudo-quaternion K\"ahler cases are studied.


\section{Preliminaries}

\begin{definition}
A pseudo-Riemannian manifold $(M,g)$ is called homogeneous if
there is a connected Lie group of isometries $G$ acting
transitively on $M$. In this case, $(M,g)$ is called reductive
homogeneous if the Lie algebra $\f{g}$ of $G$ can be decomposed as
$\f{g}=\f{m}\oplus\f{h}$, where $\f{h}$ is
 the isotropy algebra of a point $p\in M$, and $[\f{m},\f{h}]\subset\f{m}$.
\end{definition}

In \cite{GO}, Ambrose-Singer Theorem \cite{AS} is extended to the
pseudo-Rie- mannian setting:

\begin{theorem}
Let $(M,g)$ be a connected, simply-connected and (geodesically) complete pseudo-Riemannian manifold. It is equivalent:
\begin{enumerate}
\item $(M,g)$ is reductive homogeneous.

\item $(M,g)$ admits a linear connection $\wnabla$ such that
\begin{equation}\label{AS}\wnabla g=0,\qquad \wnabla R=0,\qquad \wnabla S=0,\end{equation}
where $S=\nabla-\wnabla$, $\nabla$ is the Levi-Civita connection of $g$, and $R$ is the curvature tensor field of $g$.
\end{enumerate}
\end{theorem}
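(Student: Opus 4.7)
The plan is to treat the two implications separately. The direction $(1) \Rightarrow (2)$ is the classical construction of the canonical connection on a reductive homogeneous space, while $(2) \Rightarrow (1)$ is a Nomizu-style reconstruction of a transitive group of isometries from the Ambrose--Singer data, with the completeness and simply-connectedness hypotheses serving to pass from the infinitesimal picture to a global one.

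For $(1) \Rightarrow (2)$, given the reductive decomposition $\fg = \f{m} \oplus \f{h}$, I would take $\wnabla$ to be the canonical connection on $M = G/H$, namely the unique $G$-invariant linear connection whose parallel transport along the curves $t \mapsto \exp(tX)\cdot p$ with $X \in \f{m}$ coincides with the differential of $\exp(tX)$. Every $G$-invariant tensor field is $\wnabla$-parallel, so in particular $g$, the Levi-Civita curvature $R$, and the tensor $S = \nabla - \wnabla$ are all parallel with respect to $\wnabla$; this yields the three equations \eqref{AS}.

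For $(2) \Rightarrow (1)$, fix a point $p \in M$ and consider the $\wnabla$-holonomy algebra $\widetilde{\f{h}} \subset \mathfrak{so}(T_pM, g_p)$. The orthogonal containment follows from $\wnabla g = 0$, while $\wnabla R = 0$ and $\wnabla S = 0$ force every element of $\widetilde{\f{h}}$ to annihilate $R_p$, $S_p$, and hence also the $\wnabla$-curvature $\widetilde{R}_p$. On the vector space $\fg := T_pM \oplus \widetilde{\f{h}}$ I would impose the bracket
\begin{align*}
[A, B] &= AB - BA, & & A, B \in \widetilde{\f{h}},\\
[A, X] &= A(X), & & A \in \widetilde{\f{h}},\ X \in T_pM,\\
[X, Y] &= -\widetilde{R}_p(X, Y) + S_X Y - S_Y X, & & X, Y \in T_pM,
\end{align*}
whose Jacobi identity reduces to the first and second Bianchi identities of $\wnabla$ combined with the invariance statements above. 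Integrating $\fg$ to a simply-connected Lie group $\widetilde{G}$, I would construct its action on $M$ by transporting the infinitesimal data along $\wnabla$-geodesics issuing from $p$. Completeness ensures that geodesics reach all of $M$ so that this transport is globally defined, and simply-connectedness allows the resulting local isometries to be glued into a globally defined transitive action; the splitting $\fg = T_pM \oplus \widetilde{\f{h}}$ is by construction $\mathrm{Ad}(\widetilde{H})$-invariant for the connected isotropy $\widetilde{H} \subset \widetilde{G}$, supplying the reductive decomposition.

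The main obstacle I expect is precisely this globalization step in $(2) \Rightarrow (1)$. The Jacobi-identity verification is a mechanical but non-trivial expansion of the Bianchi identities of $\wnabla$, and the local existence of isometries between normal neighbourhoods is classical once $g$, $R$, and $S$ are known to be $\wnabla$-parallel. What is delicate, and specific to the pseudo-Riemannian setting, is that no distance function is available to control the transport: one must argue purely via the affine exponential map of $\wnabla$, using completeness to guarantee that geodesics cover $M$ and simply-connectedness to eliminate monodromy obstructions when extending the local isometries along loops.
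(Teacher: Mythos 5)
The paper does not actually prove this theorem: it is quoted from Gadea--Oubi\~na \cite{GO} as the pseudo-Riemannian extension of the Ambrose--Singer theorem \cite{AS}, so there is no internal argument to compare yours against. Your outline follows the same classical route as those references: the canonical connection of the reductive decomposition for $(1)\Rightarrow(2)$, and the Nomizu-style reconstruction of a transitive group from the algebra $\fg=T_pM\oplus\widetilde{\f{h}}$ for $(2)\Rightarrow(1)$. Indeed, the bracket you write down is, up to a curvature sign convention, exactly the one the paper itself uses in Section \ref{section homogeneous model} to build the homogeneous model $G/H$, and the Jacobi identity does reduce to the Bianchi identities of $\wnabla$ together with the $\wnabla$-parallelism of $\widetilde{T}$ and $\widetilde{R}$ (both of which follow from $\wnabla S=0$ and $\wnabla R=0$).

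The one point where your sketch has a genuine gap rather than a deferred computation is the completeness bookkeeping in the globalization step. What the reconstruction theorem for connections with parallel torsion and curvature requires is completeness of the \emph{canonical} connection $\wnabla$, since the transvections are built from the affine exponential map of $\wnabla$; the hypothesis of the theorem, however, is geodesic completeness of $g$, i.e.\ of the Levi-Civita connection $\nabla$. In the Riemannian case one passes from one to the other via $\wnabla g=0$ (so $\wnabla$-geodesics have constant speed) together with Hopf--Rinow; in indefinite signature Hopf--Rinow is unavailable and this implication is not automatic. Your sentence ``completeness ensures that geodesics reach all of $M$'' silently uses $g$-completeness where $\wnabla$-completeness is needed, and bridging the two (or restructuring the argument to avoid the issue) is precisely the non-trivial pseudo-Riemannian content of \cite{GO}. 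The rest of your plan --- existence of local affine maps preserving $g$, $\widetilde{T}$, $\widetilde{R}$ between normal neighbourhoods, and the monodromy argument using simple connectedness --- is standard and correct in outline.
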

A $(1,2)$-tensor field $S$ satisfying equations (\ref{AS}) (called
Ambrose-Singer equations) is called a \textit{homogeneous
pseudo-Riemannian
 structure}. We will also denote by $S$ the associated
$(0,3)$-tensor field obtained by lowering the contravariant index,
$S_{XYZ}=g(S_XY,Z)$. Let $\mathcal{S}$ be the space of homogeneous
pseudo-Riemannian structures, it is decomposed in three primitive
classes
\begin{align*}
\mathcal{S}_1 &  = \Bigl\{ S \in \mathcal{S} \,:\,S_{XYZ}=g(X,Y)\theta(Z)-g(X,Z)\theta(Y), \,\theta\in\Gamma(T^*M)\Bigr\},\\
\mathcal{S}_2 & = \Bigl\{ S \in \mathcal{S} \,:\,\Cyclic{\fdS}{\SXYZ}S_{XYZ}=0, \,c_{12}(S)=0 \Bigr\},\\
\mathcal{S}_3 & = \Bigl\{ S \in \mathcal{S} \,:\,S_{XYZ}+S_{YXZ}=0 \Bigr\},\\
\end{align*}
where $c_{12}(S)(Z)=\sum_{i=1}^{m}\epsilon_iS_{e_ie_iZ}$ for any
orthonormal basis $\{e_1,\ldots,e_{m}\}$ with
$g(e_i,e_i)=\epsilon_i$. A tensor field in the class
$\mathcal{S}_1$ is called of linear type, and in addition, it is
called degenerate if the vector field $\theta^{\sharp}$ is
isotropic.

\bigskip

Let $(M,g)$ be a pseudo-Riemannian manifold of dimension $2n$, and $J$ a pseudo-K\"ahler structure, that is, a parallel pseudo-Hermitian structure with respect to the Levi-Civita connection.

\begin{definition}
 A pseudo-K\"ahler manifold $(M,g,J)$ is called a homogeneous pseudo-K\"ahler manifold
 if there is a connected Lie group of isometries $G$ acting transitively on $M$ and preserving
 $J$. In this case $(M,g,J)$ is called reductive homogeneous pseudo-K\"ahler if the Lie algebra $\f{g}$ of $G$
 can be decomposed as $\f{g}=\f{m}\oplus\f{h}$, where $\f{h}$ is the isotropy algebra of a point $p\in M$, and $[\f{m},\f{h}]\subset\f{m}$.
\end{definition}

As a corollary of Kiri\v{c}enko's theorem  \cite{Kir} we have.

\begin{theorem}
Let $(M,g,J)$ be a connected, simply-connected and (geodesically) complete pseudo-K\"ahler manifold. It is equivalent:
\begin{enumerate}
\item $(M,g,J)$ is reductive homogeneous pseudo-K\"ahler.

\item $(M,g,J)$ admits a linear connection $\wnabla$ such that
\begin{equation}\label{AS+J}\wnabla g=0,\qquad \wnabla R=0,\qquad \wnabla S=0,\qquad \wnabla J=0,\end{equation}
where $S=\nabla-\wnabla$, $\nabla$ is the Levi-Civita connection of $g$, and $R$ is the curvature tensor field of $g$.
\end{enumerate}
\end{theorem}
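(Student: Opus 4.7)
The plan is to derive this from the pseudo-Riemannian Ambrose–Singer theorem stated just above together with Kiri\v{c}enko's theorem, treating $J$ as an additional $\widetilde{\nabla}$-parallel tensor that must be preserved by the homogeneous structure.

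For the implication (1)$\Rightarrow$(2), I would start with the reductive decomposition $\f{g}=\f{m}\oplus\f{h}$ provided by the hypothesis and take $\widetilde{\nabla}$ to be the canonical connection associated to this reductive structure at a reference point $p\in M$. The previous theorem already guarantees $\widetilde{\nabla}g=0$, $\widetilde{\nabla}R=0$ and $\widetilde{\nabla}S=0$, so only $\widetilde{\nabla}J=0$ needs checking. Because $G$ acts by automorphisms of $J$, the tensor $J$ is $G$-invariant; the key property of the canonical connection of a reductive homogeneous space is that every $G$-invariant tensor field is parallel with respect to it, which immediately yields $\widetilde{\nabla}J=0$.

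For the converse (2)$\Rightarrow$(1), the first three equations together with the pseudo-Riemannian Ambrose–Singer theorem give that $(M,g)$ is reductive homogeneous, so one obtains a connected Lie group $G$ of isometries acting transitively on $M$ with a reductive decomposition $\f{g}=\f{m}\oplus\f{h}$, where $G$ can be identified with the transvection group of $\widetilde{\nabla}$. Kiri\v{c}enko's theorem enters precisely here: when $\widetilde{\nabla}$ additionally parallelizes a tensor $J$ of the appropriate algebraic type, the transvection group of $\widetilde{\nabla}$ preserves $J$, because $\widetilde{\nabla}$-parallel transport (which generates $G$) commutes with $J$. Combined with the pseudo-Hermitian compatibility encoded in the algebraic type of $J$, this shows that $G$ acts by pseudo-holomorphic isometries, so $(M,g,J)$ is reductive homogeneous pseudo-K\"ahler.

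The main obstacle, or at least the non-routine point, is the converse direction: one must verify that the group produced by the Ambrose–Singer construction really consists of pseudo-holomorphic isometries rather than only of isometries. This is exactly the content invoked from Kiri\v{c}enko's work, so the proof reduces to citing it once the pseudo-Riemannian Ambrose–Singer step has been applied; the remaining verifications (the compatibility of the canonical connection with $G$-invariant tensors in one direction, and the algebraic type of $J$ being preserved by $\widetilde{\nabla}$-holonomy in the other) are standard.
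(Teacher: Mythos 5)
Your proposal is correct and follows essentially the same route as the paper, which states this result without proof as a direct corollary of Kiri\v{c}enko's theorem \cite{Kir} combined with the pseudo-Riemannian Ambrose--Singer theorem of \cite{GO}; your two directions (the canonical connection parallelizes $G$-invariant tensors, and conversely the transvection group of $\wnabla$ preserves every $\wnabla$-parallel tensor, in particular $J$) are exactly the standard content being invoked.
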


A $(1,2)$ tensor field $S$ satisfying equations (\ref{AS+J}) is called a \textit{homogeneous pseudo-K\"ahler structure}.

In \cite{BGO}, a classification of homogeneous pseudo-K\"ahler
structures is obtained. Let $\mathcal{K}$ be the space of
homogeneous pseudo-K\"ahler structures, it is decomposed in four
primitive classes

\begin{align*}
\mathcal{K}_1 &  = \Bigl\{ S \in \mathcal{K} \,:\,S_{XYZ}=\frac{1}{2}(S_{YZX}+S_{ZXY}+S_{JYJZX}+S_{JZXJY}),\\ & \hspace{81mm}\,c_{12}(S)=0 \Bigr\},\\
\mathcal{K}_2 & = \Bigl\{ S \in \mathcal{K} \,:\,S_{XYZ}=g( X,Y )\theta_1(Z)-g( X,Z)\theta_1(Y)+g( X,JY )\theta_1(JZ)\\
& \hspace{21mm}-g( X,JZ)\theta_1(JY)-2g(JY,Z)\theta_1(JX), \,\theta_1\in\Gamma(T^*M)\Bigr\},\\
\mathcal{K}_3 & = \Bigl\{ S \in \mathcal{K} \,:\,S_{XYZ}=-\frac{1}{2}(S_{YZX}+S_{ZXY}+S_{JYJZX}+S_{JZXJY}),\\ & \hspace{81mm}\,c_{12}(S)=0 \Bigr\},\\
\mathcal{K}_4 & = \Bigl\{ S \in \mathcal{K} \,:\,S_{XYZ}=g( X,Y )\theta_2(Z)-g( X,Z)\theta_2(Y)+g( X,JY )\theta_2(JZ)\\
& \hspace{21mm}-g( X,JZ)\theta_2(JY)+2g(JY,Z)\theta_2(JX), \,\theta_2\in\Gamma(T^*M)\Bigr\},
\end{align*}
where $c_{12}(S)(Z)=\sum_{i=1}^{2n}\epsilon_iS_{e_ie_iZ}$ for any orthonormal basis $\{e_1,\ldots,e_{2n}\}$ with $g(e_i,e_i)=\epsilon_i$.

\bigskip

A homogeneous pseudo-K\"ahler structure on a pseudo-K\"ahler
manifold $(M,g,J)$ is called of \textit{linear type} if it belongs to the
class $\mathcal{K}_2+\mathcal{K}_4$. The expression of a tensor field $S$ in this class is
$$S_XY=g(X,Y)\xi-g(Y,\xi)X-g(X,JY)J\xi+g(JY,\xi)JX-2g(JX,\zeta)JY,$$
for some vector fields $\xi,\zeta\in\f{X}(M)$, and equations
\eqref{AS+J}  are equivalent to
$$\wnabla \xi=\wnabla \zeta=0,\qquad \wnabla R=0.$$
With respect to the last condition, a simple computation shows
that $\wnabla R$ is independent of $\zeta$ so that $\xi =0$
implies $\nabla R=0$, that is, $M$ is locally symmetric. Along
this article, we confine ourselves to the case $\xi \neq 0$. For
non-definite metrics, we thus may distinguish the following cases:

\begin{definition}
A homogeneous pseudo-K\"ahler structure of linear type is called:
\begin{enumerate}
\item  non-degenerate if $g(\xi,\xi)\neq 0$. \item weakly
degenerate if $\xi \neq 0$ is isotropic and
$\zeta\in\mathrm{span}\{\xi,J\xi\}^{\bot}$. \item degenerate if
$\xi \neq 0$ and $\zeta$ are isotropic and
$\zeta\in\mathrm{span}\{\xi,J\xi\}^{\bot}$. \item
strongly degenerate if $\xi \neq 0$ is isotropic and $\zeta=0$.
\end{enumerate}
\end{definition}

\section{Strongly degenerate homogeneous structures of linear type}

We confine ourselves to the strongly degenerate case, that is
\begin{equation}
\label{degenerate}
S_XY=g(X,Y)\xi-g(Y,\xi)X-g(X,JY)J\xi+g(JY,\xi)JX.
\end{equation}
In \cite{BGO} the following is proved.

\begin{lemma}\label{Lemma BGO}
Let $(M,g,J)$ be a connected pseudo-K\"ahler manifold of dimension
$2n+4$, $n\geq0$, equipped with a strongly degenerate homogeneous
pseudo-K\"ahler structure $S$ of linear type. Let $\theta$ be the
$1$-form given by $\theta(X)=g(X,\xi)$. Then
\begin{enumerate}
\item $\nabla\theta=\theta\otimes\theta-(\theta\circ
J)\otimes(\theta\circ J)$. \item $\f{S}_{XYZ}\theta(X)R_{YZWU}=0$
for all $X,Y,Z,W,U\in\f{X}(M)$. \item $\nabla R=4\theta\otimes R$,
i.e., the manifold is recurrent (and hence harmonic).
\end{enumerate}

\end{lemma}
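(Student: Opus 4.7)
The three items will be handled in order, using as input the observation in the paragraph preceding the lemma: in the strongly degenerate linear case the Kiri\v{c}enko equations \eqref{AS+J} reduce to $\wnabla\xi=0$ and $\wnabla R=0$. For (1) the computation is immediate: $\wnabla\xi=0$ gives $\nabla_X\xi=S_X\xi$, and substituting $Y=\xi$ in \eqref{degenerate}, using $g(\xi,\xi)=0$ (which, by pseudo-Hermiticity, also gives $g(\xi,J\xi)=0$), kills two of the four terms, leaving $\nabla_X\xi=\theta(X)\xi-g(X,J\xi)J\xi=\theta(X)\xi+(\theta\circ J)(X)J\xi$; lowering the first index with $g$ yields (1).

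For (2), I would combine the Ambrose--Singer identity $\wnabla R=0$, rewritten as $\nabla_X R=-(S_X\cdot R)$ with $\cdot$ the natural derivation on the $(0,4)$-tensor $R$, together with the differential Bianchi identity $\f{S}_{XYZ}(\nabla_X R)(Y,Z,W,U)=0$, to obtain $\f{S}_{XYZ}(S_X\cdot R)(Y,Z,W,U)=0$. Substituting \eqref{degenerate} expands this into many summands, which I would group by their tensorial coefficient ($g$ versus $\theta$, with or without $J$). Repeated use of the algebraic Bianchi identity, the pair-swap symmetry $R_{YZWU}=R_{WUYZ}$, the K\"ahler invariances $R(JA,JB,C,D)=R(A,B,C,D)$ and $R(A,B,JC,JD)=R(A,B,C,D)$, and their consequence $R(JA,B,C,D)=-R(A,JB,C,D)$, is expected to collapse the expression to the desired $\f{S}_{XYZ}\theta(X)R_{YZWU}=0$.

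For (3) I work directly with $\nabla_X R=-(S_X\cdot R)$ without cyclic sum: substituting \eqref{degenerate} into each of the four slots of $R$ produces sixteen summands, which I would group into four types according to the form of the coefficient, namely $\theta(\cdot)R(X,\ldots)$, $\theta(J\cdot)R(JX,\ldots)$, $g(X,\cdot)R(\xi,\ldots)$ and $g(X,J\cdot)R(J\xi,\ldots)$. The first type collapses to $2\theta(X)R_{YZWU}$ after using (2) and its pair-swap consequence in the first-two-slot and last-two-slot pairings; the second type is reduced to the first via $R(JA,B,C,D)=-R(A,JB,C,D)$, contributing another $2\theta(X)R_{YZWU}$; and the remaining two types cancel after specialising (2) to $X=\xi$, which yields the highly restrictive form $R(\xi,Y,W,U)=\theta(Y)A(W,U)$ (and similarly for $J\xi$). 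Summing the four contributions gives $\nabla_X R=4\theta(X)R$, proving recurrence; harmonicity is then automatic.

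\textbf{Main obstacle.} The principal difficulty is the combinatorial bookkeeping in (2) and (3): the four-term structure of \eqref{degenerate} inserted into each of the four arguments of $R$ produces many summands, and making them collapse to the clean identities requires a judicious and systematic use of the algebraic and K\"ahler symmetries of the curvature tensor, together with the correct logical ordering (using (2) inside the proof of (3)).
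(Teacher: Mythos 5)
The paper does not actually prove this lemma itself (it is quoted from \cite{BGO}), so the only in-paper benchmark is the parallel computation for the quaternionic case in Proposition 6.8, whose proof has exactly the architecture you propose: evaluate $S_X\xi$ for the first item, combine $\wnabla R=0$ with the differential Bianchi identity for the second, and use the resulting identity for the recurrence. Your part (1) is correct and complete.

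There is, however, a genuine gap in your part (2), and it is not mere bookkeeping. When you substitute \eqref{degenerate} into $\mathfrak{S}_{XYZ}\bigl(R(S_XY,Z,W,U)+R(Y,S_XZ,W,U)+R(Y,Z,S_XW,U)+R(Y,Z,W,S_XU)\bigr)=0$, the summands carrying $\theta(\cdot)$ and $\theta(J\cdot)$ do collapse as you expect (the former to $2\,\mathfrak{S}_{XYZ}\theta(X)R_{YZWU}$, the latter to zero), but the summands carrying $g(X,\cdot)$ and $g(X,J\cdot)$ do not: the groups $\mathfrak{S}_{XYZ}\,g(X,JY)R(J\xi,Z,W,U)$, $\mathfrak{S}_{XYZ}\,g(X,W)R(Y,Z,\xi,U)$, $\mathfrak{S}_{XYZ}\,g(X,JW)R(Y,Z,J\xi,U)$, etc., are nonzero for a generic K\"ahler curvature tensor, and no combination of the algebraic Bianchi identity, the pair-swap symmetry and the K\"ahler invariances kills them. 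The missing ingredient --- which is precisely the step the paper makes explicit in the quaternionic analogue, Proposition 6.8 --- is the preliminary identity $R_{XY}\xi=0$ (whence $R_{XY}J\xi=JR_{XY}\xi=0$, so $R(\xi,\cdot,\cdot,\cdot)=R(J\xi,\cdot,\cdot,\cdot)=0$). This follows from your part (1): with $\nabla_X\xi=\theta(X)\xi+\theta(JX)J\xi$ one checks that $\nabla_X\nabla_Y\xi-\nabla_Y\nabla_X\xi=\nabla_{[X,Y]}\xi$. Only with this in hand do all the $g$-coefficient groups vanish and (2) drop out. The same omission affects part (3): specialising (2) to $X=\xi$ gives only $R(\xi,Z,W,U)=\theta(Z)A(W,U)$, and the resulting terms $\bigl(g(X,Z)\theta(Y)-g(X,Y)\theta(Z)\bigr)A(W,U)$ do not cancel against the $J\xi$-type terms; they must vanish, which requires $A=0$, i.e.\ again $R(\xi,\cdot,\cdot,\cdot)=0$ (derivable at that stage from (2) together with its $J$-twisted companion \eqref{thRJ}, or directly from the curvature identity above). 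Your treatment of the two $\theta$-type families in (3), each contributing $2\theta(X)R_{YZWU}$, is correct and yields the coefficient $4$. So the skeleton is right, but the proof is incomplete without establishing $R_{XY}\xi=0$ first.
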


\bigskip

Note that the anti-symmetrization of the first equation gives that
$d\theta=0$, so $\theta$ is closed.  Also note that the second
equation can be written as
\begin{equation} \label{thR}
\theta\wedge R(\cdot,\cdot,W,U)=0.
\end{equation}
Changing $X,Y,Z$ by $JX,JY,JZ$ we will have that
$$\Cyclic{\fdS}{\SJXJYJZ}\theta(JX)R_{JYJZWU}=0,$$ but since $R$ is the
curvature of a pseudo-K\"ahler metric, this equation can be
written as
\begin{equation} \label{thRJ}
(\theta \circ J)\wedge R(\cdot,\cdot,W,U)=0.
\end{equation}

We consider the complex form
$$\alpha=\theta-i(\theta\circ J),$$
which is of type $(1,0)$ with respect to $J$. By direct
calculation one has that $\nabla \alpha=\alpha\otimes\alpha$, so
again by anti-symmetrization one obtains $d\alpha=0$, and in
particular $\overline{\partial}\alpha$=0. Then, fixing a point
$p\in M$, there exists a neighborhood $U$ around $p$ and a
function $v:U\to\mathbb{C}$ such that $dv=\alpha$. Since $\alpha$
is of type $(1,0)$ and $\alpha=dv=\partial
v+\overline{\partial}v$, it must be $\overline{\partial}v=0$, so
$v$ is holomorphic. Let $w:U\to\bb{C}$, $w=e^{-v}$, then
\begin{equation}\label{nabla dw}
\nabla dw=\nabla(-e^{-v}dv)=-d(e^{-v})\otimes dv-e^{-v}\nabla
dv=e^{-v}\alpha\otimes\alpha-e^{-v}\alpha\otimes\alpha=0.
\end{equation}

We write $v=v^1+iv^2$ and $w=w^1+iw^2$ so that
$w_1=e^{-v^1}\cos{v^2}$ and $w^2=-e^{-v^1}\sin{v^2}$, and as
$dv^1=\theta$ and $dv^2=-\theta\circ J$ we have
\begin{equation}\label{equatio dw1 dw2}
\left\{ \begin{array}{l} dw^1=-(e^{-v^1}\cos{v^2})\theta +
(e^{-v^1}\sin{v^2})(\theta\circ
J)\\
dw^2=(e^{-v^1}\sin{v^2})\theta + (e^{-v^1}\cos{v^2})(\theta\circ
J).
\end{array} \right.
\end{equation}
From (\ref{thR}), \eqref{thRJ} and the last two equations we have
\begin{equation}\label{eq dw wedge R}
\left\{\begin{array}{c} dw^1\wedge R(\cdot,\cdot,W,U)=0\\
dw^2\wedge R(\cdot,\cdot,W,U)=0

\end{array}\qquad W,U\in\f{X}(M).\right.
\end{equation}

Let now $e_{w_1}, e_{w_2}$ be vector fields such that
$dw^i(e_{w_j})=\delta_{j}^i$, from (\ref{eq dw wedge R}) we have
for $i=1,2$
\begin{equation}
\label{RRR} 0=i_{e_{w_i}}(dw^i\wedge
R(\cdot,\cdot,W,U))=R(\cdot,\cdot,W,U)-dw^i\wedge
R(e_{w_i},\cdot,W,U), \end{equation}
that is
$R(\cdot,\cdot,W,U)=dw^i\wedge R(e_{w_i},\cdot,W,U)$. Applying
this we have
\begin{eqnarray*}R(e_{w_i},\cdot,W,U) & =
& R(W,U,e_{w_i},\cdot)\\ & = & W(w^i)R(e_{w_i},U,e_{w_i},\cdot)-U(w^i)R(e_{w_i},W,e_{w_i},\cdot)
\end{eqnarray*}
and substituting in \eqref{RRR} we obtain
\begin{eqnarray*}
R(X,Y,W,U) & = & X(w^i)W(w^i)R(e_{w_i},U,e_{w_i},Y)\\
           & & -X(w^i)U(w^i)R(e_{w_i},W,e_{w_i},Y)\\
           &   & -Y(w^i)W(w^i)R(e_{w_i},U,e_{w_i},X)\\
           & & +Y(w^i)U(w^i)R(e_{w_i},W,e_{w_i},X),
\end{eqnarray*}
i.e.,
$$R=(dw^i\otimes dw^i)\wedge R(e_{w_i},\cdot,e_{w_i},\cdot).$$
Since this is valid for $i=1,2$, if we take a basis
$\{e_{w_1},e_{w_2},e_a\}_{a=1,\ldots,2n+2}$ of $T_qM$, $q\in U$,
such that $dw^i(e_a)=0$, it follows that all the components of $R$
with respect to this basis other than
$R(e_{w_1},e_{w_2},e_{w_1},e_{w_2})$ vanish.

\subsection{The local form of the metric}

\begin{theorem}\label{prop princ}
Let $(M,g,J)$ be a pseudo-K\"ahler manifold of dimension $2n+4$,
$n\geq 0$, admitting a strongly degenerate homogeneous
pseudo-K\"ahler structure  of linear type $S$. Then each $p\in M$
has a neighborhood holomorphically isometric to an open subset of
$\bb{C}^{n+2}$ with the K\"ahler metric
\begin{multline}\label{metric2}
g  =  dw^1dz^1+dw^2dz^2+b(dw^1dw^1+dw^2dw^2)+\sum_{a=1}^nr_a(dx^adw^1+dy^adw^2)\\
    +\sum_{a=1}^ns_a(dx^adw^2-dy^adw^1)+\sum_{a=1}^n(dx^adx^a+dy^ady^a),
\end{multline}
where $\epsilon_a=\pm 1$, and the functions $b,r_a,s_a$ only
depend on the coordinates $\{w^1,w^2\}$ and satisfy
\begin{equation}\label{3
equations singular}\dfrac{\partial s_a}{\partial
w^1}=\dfrac{\partial r_a}{\partial w^2},\qquad \dfrac{\partial
s_a}{\partial w^2}=-\dfrac{\partial r_a}{\partial
w^1}\vspace{2mm},\qquad \Delta
b=\dfrac{b_0}{\left((w^1)^2+(w^2)^2\right)^2},\end{equation} for
$b_0\in\R$ and $a=1,\ldots,n$.
\end{theorem}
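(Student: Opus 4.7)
The plan is to build holomorphic coordinates from the parallel structure that has just been established, read off the metric in these coordinates, and then use the highly restrictive form of the curvature derived just before the theorem to pin down the remaining freedom in the coefficients.

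First I would use that $dw^1, dw^2$ are parallel, hence their metric duals $U_i := (dw^i)^\sharp$ are parallel too. Both $U_i$ are null and mutually orthogonal, since $\xi$ and $J\xi$ are; and a direct calculation using that $\alpha = -dw/w$ is of type $(1,0)$ yields $dw^1 \circ J = -dw^2$, hence $JU_1 = U_2$. Therefore $Z := \tfrac{1}{2}(U_1 - iU_2)$ is a nowhere-vanishing parallel $(1,0)$-vector field (in particular holomorphic), and $dw(Z) = 0$ because $U_1, U_2$ are null and orthogonal. Holomorphic rectification of $Z$ in a chart adapted to the holomorphic submersion $w$ produces a local holomorphic chart $(w, \zeta^1, \ldots, \zeta^n, z)$ in which $Z = \partial/\partial z$. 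Setting $w = w^1 + iw^2$, $\zeta^a = x^a + iy^a$, $z = z^1 + iz^2$ produces the real coordinates appearing in \eqref{metric2}, with $U_i = \partial/\partial z^i$.

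Second, from $U_i = (dw^i)^\sharp$ the $z^i$-row of the Gram matrix has only one non-zero entry, $g_{z^i w^i} = 1$, accounting for the $dw^1 dz^1 + dw^2 dz^2$ piece of the metric. Because parallel vector fields are Killing, the fact that $U_i = \partial/\partial z^i$ is parallel forces all remaining components of $g$ to be independent of $z^1, z^2$. Thus the metric must have the shape
\[
g = dw^1 dz^1 + dw^2 dz^2 + (\text{block depending only on } (w^1, w^2, x^a, y^a)).
\]

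Third, I would plug this ansatz into the Kähler curvature formula $R_{i\bar j k\bar l} = -\partial_k \partial_{\bar l} g_{i\bar j} + g^{p\bar q}(\partial_k g_{i\bar q})(\partial_{\bar l} g_{p\bar j})$ and invoke the fact, derived in the paragraph preceding the theorem, that in the coframe dual to $(dw^1, dw^2, \ldots)$ the only non-vanishing component of $R$ is $R(e_{w_1}, e_{w_2}, e_{w_1}, e_{w_2})$. Vanishing of the $\zeta$-only curvature components forces the $\zeta^a\bar\zeta^b$ block to be flat, so after a holomorphic change of the $\zeta^a$'s I may assume $g_{\zeta^a\bar\zeta^b} = \tfrac{1}{2}\delta_{ab}$, producing the $\sum_a(dx^a dx^a + dy^a dy^a)$ term. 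Vanishing of the mixed components $R_{w\bar\zeta^a\,\cdot\,\cdot}$ then forces $g_{w\bar\zeta^a}$ to be holomorphic in $w$ and independent of the $\zeta$-coordinates. Writing $2g_{w\bar\zeta^a} = r_a - is_a$ translates holomorphicity in $w$ into precisely the two Cauchy-Riemann equations $\partial s_a/\partial w^1 = \partial r_a/\partial w^2$ and $\partial s_a/\partial w^2 = -\partial r_a/\partial w^1$, and independence of $\zeta$ shows that $r_a, s_a$ are functions of $(w^1, w^2)$ only. The same analysis shows that $g_{w\bar w}$, and hence $b$, depends only on $(w^1, w^2)$.

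Finally, for the Laplace-type equation on $b$, I would invoke Lemma \ref{Lemma BGO}(3), i.e.\ the recurrence $\nabla R = 4\theta\otimes R$. After the previous normalizations, the only contribution of $b$ to $R_{w\bar w w\bar w}$ comes from the Kähler term $-\partial_w\partial_{\bar w} g_{w\bar w}$, while the contributions of the (now known) cross data $r_a, s_a$ enter only quadratically and can be read off directly. Because $\theta = -\mathrm{Re}(dw/w)$, integrating the recurrence along a curve in the $w$-direction produces $R_{w\bar w w\bar w}\propto |w|^{-4}$, and subtracting the known quadratic $(r_a, s_a)$-contribution gives $\partial_w\partial_{\bar w} b \propto |w|^{-4}$, i.e.\ $\Delta b = b_0/((w^1)^2 + (w^2)^2)^2$. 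The main technical obstacle is this last step: cleanly isolating the $b$-piece of $R_{w\bar w w\bar w}$ from the quadratic contributions of the cross terms, and verifying that the recurrence selects exactly the $|w|^{-4}$ profile (rather than some more general right-hand side).
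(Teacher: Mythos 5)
Your proposal is correct in outline and follows essentially the same route as the paper: adapted coordinates built from the parallel form $dw$ and its parallel, null, holomorphic Killing dual fields; the degenerate form of the curvature (only the $w^1w^2w^1w^2$ component survives) to trivialize the transverse geometry; and the recurrence $\nabla R=4\theta\otimes R$ with $\theta=-\mathrm{Re}(dw/w)$ to integrate out the profile. The one place you genuinely diverge is the normalization of the transverse block. You propose to flatten $g_{\zeta^a\bar\zeta^b}$ by a holomorphic change of the $\zeta^a$ after observing that the leafwise curvature vanishes; but that block is a $w$-dependent family of flat metrics, and you must make it simultaneously constant (equal to $\tfrac12\epsilon_a\delta_{ab}$ --- note the signs $\epsilon_a$, which your normalization silently drops) while keeping the cross terms functions of $w$ alone. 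The paper achieves this by a holonomy argument: $E=\mathrm{span}\{\partial_{w^i},\partial_{z^i}\}$ and $E^{\bot}$ are holonomy-invariant, the holonomy acts trivially on $E^{\bot}$, so an orthonormal basis of $E^{\bot}_p$ extends to globally parallel, commuting, holomorphic Killing fields $e_a,Je_a$ that are then taken as coordinate fields; this is what actually delivers your assertion, and it also yields the Cauchy--Riemann equations for $r_a,s_a$ directly from $\nabla\partial_{x^a}=\nabla\partial_{y^a}=0$ rather than from the mixed curvature components. Finally, the ``main technical obstacle'' you flag does resolve exactly as you hope: the explicit Christoffel computation shows that, once the Cauchy--Riemann equations hold, all $r_a,s_a$ contributions to $R(\partial_{w^1},\partial_{w^2},\partial_{w^1},\partial_{w^2})$ cancel and the component equals $\tfrac12\Delta b$ on the nose, after which the recurrence gives the two first-order equations $\partial_{w^i}(\Delta b)=-4w^i\,\Delta b/((w^1)^2+(w^2)^2)$ and hence precisely the stated Poisson equation.
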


\begin{proof}
Let $p\in M$ and $w=w^1+iw^2:U\to\bb{C}$ be the holomorphic
function obtained above. Since $dw$ is not zero at some point and
$\nabla dw=0$ we have that $dw$ is nowhere zero. Then for every
$\lambda\in\bb{C}$, if the pre-image
$\mathcal{H}_{\lambda}=w^{-1}(\lambda)$ is nonempty it defines a
regular complex hypersurface whose tangent space is given by the
kernel of $dw$. This means that we have our neighborhood $U$
foliated by complex hypersurfaces $\mathcal{H}_{\lambda}$ for
$\lambda$ in some open set of $\bb{C}$ (note that if
$w(p)=\lambda_0$ then the set
$\{\lambda\in\bb{C}/w^{-1}(\lambda)\neq\emptyset\}$ is a
neighborhood of $\lambda_0$ not containing $0\in\bb{C}$).

Let $Z,JZ\in\f{X}(U)$ be such that $dw^1=g(\cdot,Z)$ and
$dw^2=g(\cdot,JZ)$ respectively. By the expression of $dw^1$ and
$dw^2$ in terms of $\theta$ and $\theta\circ J$ it is easy to see
that both $Z$ and $JZ$ are  linear combinations of $\xi$ and
$J\xi$, so they are isotropic. In addition as $Z$ and $JZ$ are
orthogonal we have $dw(Z)=dw(JZ)=0$. This means that $Z$ and $JZ$
are always tangent to the foliation given by the hypersurfaces
$\mathcal{H}_{\lambda}$. On the other hand, from \eqref{nabla dw}
we have
$$\nabla Z=\nabla JZ=0.$$
In particular, if $\mathcal{L}$ denotes the Lie derivative, we
have
\begin{eqnarray*}
(\mathcal{L}_Zg)(X,Y)& = & Zg(X,Y)-g([Z,X],Y)-g(X,[Y,Z])\\
                     & = & Zg(X,Y)-g(\nabla_ZX,Y)+g(\nabla_XZ,Y)\\
                     &   & -g(X,\nabla_ZY)+g(X,\nabla_YZ)\\
                     & = & 0,
\end{eqnarray*}
and
\begin{eqnarray*}
(\mathcal{L}_ZJ)X & = & [Z,JX]-J[Z,X]\\
                  & = & \nabla_ZJX-\nabla_{JX}Z-J\nabla_ZX+J\nabla_XZ\\
                  & = & 0,
\end{eqnarray*}
and the same for $JZ$. This means that $Z$ and $JZ$ are Killing
holomorphic vector fields. Moreover,
$[Z,JZ]=\nabla_ZJZ-\nabla_{JZ}Z=0$. Reducing the neighborhood $U$
if necessarily, we can thus take complex coordinates
$\{w,z,\tilde{z}^a\}_{a=1,\ldots,n}$ such that $w$ is the function
$w=w^1+iw^2$, $Z=\partial_{z^1}$ and $JZ=\partial_{z^2}$ where
$z=z^1+iz^2$, and $\{z,\tilde{z}^a\}$ are coordinates adapted to
the foliation given by $\mathcal{H}_{\lambda}$.

Let $\tilde{z}^a=\tilde{x}^a+i\tilde{y}^a$, it is obvious by
definition that $g(\partial_{w^i},\partial_{z^j})=\delta^i_j$.
Also since $\partial_{\tilde{x}^a}$ and $\partial_{\tilde{y}^a}$
are tangent to the foliation $\mathcal{H}_{\lambda}$, we have
$g(\partial_{z^i},\partial_{\tilde{x}^a})=g(\partial_{z^i},\partial_{\tilde{y}^a})=0$.
Hence the metric in the real coordinates
$\{w^1,w^2,z^1,z^2,\tilde{x}^a,\tilde{y}^a\}$ is given by
\begin{multline*}
g  =  dw^1dz^1+dw^2dz^2+b(dw^1dw^1+dw^2dw^2)\\
  +\sum_{a=1}^n\tilde{r}_a(d\tilde{x}^adw^1+d\tilde{y}^adw^2)
  +\sum_{a=1}^n\tilde{s}_a(d\tilde{x}^adw^2-d\tilde{y}^adw^1)+q(w^1,w^2),
\end{multline*}
where $q(w^1,w^2)$ is an Hermitian metric in the coordinates
$\{\tilde{x}^a,\tilde{y}^a\}$, and the functions
$b,\tilde{r}_a,\tilde{s}_a$ do not depend on the coordinates
$z^1,z^2$ as $Z$ and $JZ$ are Killing vector fields.

Among all the possible choices of the coordinates
$\{\tilde{x}^a,\tilde{y}^a\}$ along the leaves of the foliation
$\mathcal{H}_\lambda$ we now construct a convenient set to
simplify the term $q(w^1,w^2)$ above. Recall that all the
components of the curvature (as a $(0,4)$-tensor) other than
$R(\partial_{w^1},\partial_{w^2},\partial_{w^1},\partial_{w^2})$
vanish. With respect to the curvature $(1,3)$-tensor field we
obtain
\begin{eqnarray*}
R=R(\partial_{w^1},\partial_{w^2},\partial_{w^1},\partial_{w^2})\left\{(dw^1\wedge
dw^2)\otimes(dw^1\otimes\partial_{z^2})\right.\\
\left. -(dw^1\wedge
dw^2)\otimes(dw^2\otimes\partial_{z^1})\right\}.\end{eqnarray*}
This means that the endomorphism $R_{XY}:T_pM\to T_pM$ is zero if
$X,Y\notin\mathrm{span}\{\partial_{w^1},\partial_{w^2}\}$ and
\begin{equation}\label{curvature op}\begin{array}{rrcl}
R_{\partial_{w^1}\partial_{w^2}}: & T_pM & \to & T_pM\\
                                   & \partial_{w^1} & \mapsto &
                                   R(\partial_{w^1},\partial_{w^2},\partial_{w^1},\partial_{w^2})\partial_{z^2}\\
                                   & \partial_{w^2} & \mapsto &
                                   -R(\partial_{w^1},\partial_{w^2},\partial_{w^1},\partial_{w^2})\partial_{z^1}\\
                                   & W & \mapsto & 0 \hspace{3em}\text{if} \hspace{1em} W\notin
                                   \mathrm{span}\{\partial_{w^1},\partial_{w^2}\}.
\end{array}\end{equation}
It is a well-known result (cf. \cite[Ch.3,\S 9]{KN}) that the
holonomy algebra $\f{hol}_p$ of $\nabla$ is spanned by all the
elements
$$\tau^{-1}\circ R_{\tau X\tau Y}\circ \tau$$
where $X,Y\in T_pM$ and $\tau$ is the parallel displacement along
an arbitrary piecewise differentiable curve starting at $p$. We
restrict ourselves to the holonomy of the open set $(U,g)$. Let
$E\subset T_pM$ be the subspace spanned by
$\{\partial_{w^1},\partial_{w^2},\partial_{z^1},\partial_{z^2}\}$
at $p$. Since the image of the curvature operator is contained in
$\mathrm{span}\{\partial_{z^1},\partial_{z^2}\}$ and these are
invariant by parallel displacement, we have that $E$ is invariant
by the action of the holonomy algebra $\f{hol}_p (U,g)$. Taking
$U$ simply-connected, this implies that $E$ is invariant by the
holonomy group $Hol_p(U,g)$. Since $E^{\bot}$ is also invariant by
the action of the holonomy and the image of the curvature operator
is contained in $E$, the action on $E^{\bot}$ is necessarily
trivial. Then, let $\{(e_1)_p,(Je_1)_p,\ldots,(e_n)_p,(Je_n)_p\}$
be an orthonormal basis of $E^{\bot}$, we can extend the vectors
$(e_a)_p$ and $(Je_a)_p$ along $U$ by parallel transport
independently of the chosen path. In this way we obtain parallel
vector fields $e_1,Je_1,\ldots,e_n,Je_n$ on $U$. Note that since
$\nabla$ is torsionless the condition $\nabla e_a=0=\nabla Je_a$
implies that all these vector fields commute between them and with
$\partial_{z^1}$ and $\partial_{z^2}$. Besides, it implies that
$\mathcal{L}_{e_a}g=0=\mathcal{L}_{Je_a}g$ and
$\mathcal{L}_{e_a}J=0=\mathcal{L}_{Je_a}J$, so they are Killing
holomorphic vector fields.

Let $X$ be one of the vector fields
$e_a,Je_a,\partial_{z^1},\partial_{z^2}$, and let $\gamma$ be any
path starting at $p$. We have that
$$\frac{d}{dt} (dw(X)_{\gamma(t)})=(\nabla_{\dot{\gamma}(t)} dw)(X_{\gamma(t)})+d w\left(\nabla_{\dot{\gamma}(t)}X\right)=0,$$
so the function $dw(X)$ is constant in $U$, and since $dw(X)_p=0$,
it is identically zero. This means that the vector fields
$e_a,Je_a,\partial_{z^1},\partial_{z_2}$ are tangent to the
hypersurfaces $\mathcal{H}_{\lambda}$ so their flows starting at a
point in a hypersurface $\mathcal{H}_{\lambda}$ remain in
$\mathcal{H}_{\lambda}$.

This means that we can take complex coordinates $\{w,z,z^a\}$,
$a=1,\ldots,n$, such that $\{z,z^a\}$, $a=1,\ldots,n$, are adapted
to the foliation, and $\partial_{x^a}=e_a$, $\partial_{y^a}=Je_a$,
where $z^a= x^a+iy^a$. By construction, the metric $g$ in the real
coordinates $\{w^1,w^2,z^1,z^2,x^a,y^a\}$ is written
\begin{multline}\label{metric real coord}
g  =  dw^1dz^1+dw^2dz^2+b(dw^1dw^1+dw^2dw^2)
+\sum_{a=1}^nr_a(dx^adw^1+dy^adw^2)\\+\sum_{a=1}^ns_a(dx^adw^2-dy^adw^1)
+\sum_{a=1}^n\epsilon_a(dx^adx^a+dy^ady^a),
\end{multline}
for some functions $b,r_a,s_a$ only depending on the variables $w^1$ and $w^2$, and $\epsilon_a=\pm 1$.
The inverse of this metric is
\begin{equation}\label{inverse metric}(g^{\mu\nu})=\begin{pmatrix}
0 & 0 & 1 & 0 & 0 & 0 & \ldots & 0 & 0 \\
0 & 0 & 0 & 1 & 0 & 0 & \ldots & 0 & 0 \\
1 & 0 & B & 0 & -\epsilon_1r_1 & \epsilon_1s_1 & \ldots & -\epsilon_nr_n & \epsilon_ns_n \\
0 & 1 & 0 & B & -\epsilon_1s_1 & -\epsilon_1r_1 & \ldots & -\epsilon_ns_n & -\epsilon_nr_n \\
0 & 0 & -\epsilon_1r_1 & -\epsilon_1s_1 & \epsilon_1 & 0 & & & \\
0 & 0 & \epsilon_1s_1 & -\epsilon_1r_1  & 0 & \epsilon_1 & & & \\
\vdots & \vdots & \vdots & \vdots & & & \ddots & & \\
0 & 0 & -\epsilon_nr_n & -\epsilon_ns_n & & & & \epsilon_n & 0  \\
0 & 0 & \epsilon_ns_n & -\epsilon_nr_n  & & & & 0 & \epsilon_n
\end{pmatrix}\end{equation}
where $B=-b+\sum_a\epsilon_a(r_a^2+s_a^2)$. Forcing this metric to
fulfill $\nabla \partial_{x^a}=\nabla \partial_{y^a}=0$,
$a=1,\ldots,n$, the following Christoffel symbols are zero
\begin{eqnarray*}
\Gamma^{z^2}_{w^1x^a}=\frac{1}{2}\left\{ \frac{\partial
s_a}{\partial w^1}-\frac{\partial r_a}{\partial w^2} \right\}, &
\qquad & \Gamma^{z^1}_{w^2x^a}=\frac{1}{2}\left\{ \frac{\partial
r_a}{\partial w^2}-\frac{\partial s_a}{\partial w^1} \right\},\\
\Gamma^{z^2}_{w^1y^a}=\frac{1}{2}\left\{ -\frac{\partial
s_a}{\partial w^2}-\frac{\partial r_a}{\partial w^1} \right\}, &
\qquad &  \Gamma^{z^1}_{w^2y^a}=\frac{1}{2}\left\{ \frac{\partial
r_a}{\partial w^1}+\frac{\partial s_a}{\partial w^2} \right\}.\\
\end{eqnarray*}
Hence the functions $r_a,s_a$ satisfy
\begin{equation}\label{Cauchy-Riemann}
\left.
\begin{array}{c}
\dfrac{\partial s_a}{\partial w^1}=\dfrac{\partial r_a}{\partial
w^2}\vspace{2mm}\\
\dfrac{\partial s_a}{\partial w^2}=-\dfrac{\partial r_a}{\partial
w^1}.
\end{array}
\right\}
\end{equation}
We can now write the Hermitian metric $g$ in the complex
coordinates $\{w,z,z^a\}$ as
\begin{eqnarray*}
h & = & \frac{1}{2}\left( dw\otimes d\overline{z}+dz\otimes
d\overline{w}\right)+\frac{b}{2}dw\otimes
d\overline{w}+\frac{1}{2}\sum_{a=1}^n\epsilon_a dz^a\otimes
d\overline{z^a}\\
& & +\sum_{a=1}^n\frac{h_a}{2}dw\otimes
d\overline{z^a}+\sum_{a=1}^n\frac{\overline{h_a}}{2}dz^a\otimes
d\overline{w},
\end{eqnarray*}
where $h_a=r_a+i(-s_a)$ is holomorphic (and $\overline{h_a}$ is
anti-holomorphic) for $a=1,\ldots,n$.

\bigskip

The remaining (possibly) non-zero Christoffel symbols are
\begin{eqnarray*}
\Gamma_{w^1w^2}^{z^1} & = & \tfrac{1}{2}\frac{\partial b}{\partial
w^2}-\tfrac{1}{2}\sum_a r_a\left( \frac{\partial r_a}{\partial
w^2}+\frac{\partial s_a}{\partial w^1} \right)-\tfrac{1}{2}\sum_a
s_a\left(-\frac{\partial s_a}{\partial
w^2}+\frac{\partial r_a}{\partial w^1}\right)\\
\Gamma_{w^1w^2}^{z^2} & = & \tfrac{1}{2}\frac{\partial b}{\partial
w^1}+\tfrac{1}{2}\sum_a s_a\left( \frac{\partial r_a}{\partial
w^2}+\frac{\partial s_a}{\partial w^1} \right)-\tfrac{1}{2}\sum_a
r_a\left(-\frac{\partial s_a}{\partial
w^2}+\frac{\partial r_a}{\partial w^1}\right)\\
\Gamma_{w^1w^1}^{z^1} & = & \frac{1}{2}\frac{\partial b}{\partial
w^1}+\sum_a(-r_a) \frac{\partial r_a}{\partial
w^1}+\sum_as_a\frac{\partial s_a}{\partial
w^1}\\
\Gamma_{w^1w^1}^{z^2} & = & -\frac{1}{2}\frac{\partial b}{\partial
w^2}+\sum_as_a\frac{\partial r_a}{\partial
w^1}+\sum_ar_a\frac{\partial s_a}{\partial
w^1}\\
\Gamma_{w^2w^2}^{z^1} & = & -\frac{1}{2}\frac{\partial b}{\partial
w^1}+\sum_a(-r_a)\frac{\partial s_a}{\partial
w^2}+\sum_a(-s_a)\frac{\partial r_a}{\partial
w^2}\\
\Gamma_{w^2w^2}^{z^2} & = & \frac{1}{2}\frac{\partial b}{\partial
w^2}+\sum_as_a\frac{\partial s_a}{\partial
w^2}+\sum_a(-r_a)\frac{\partial r_a}{\partial
w^2}\\
\Gamma^{x^a}_{w^1w^2} & = & \frac{1}{2}\left( \frac{\partial
r_a}{\partial w^2}+\frac{\partial s_a}{\partial w^1}\right)\\
\Gamma^{x^a}_{w^1w^1} & = & \frac{\partial r_a}{\partial w^1}\\
\Gamma^{x^a}_{w^2w^2} & = & \frac{\partial s_a}{\partial w^2}\\
\Gamma^{y^a}_{w^1w^2} & = & \frac{1}{2}\left( -\frac{\partial
s_a}{\partial w^2}+\frac{\partial r_a}{\partial w^1}\right)\\
\Gamma^{y^a}_{w^1w^1} & = & -\frac{\partial s_a}{\partial w^1}\\
\Gamma^{y^a}_{w^2w^2} & = & \frac{\partial r_a}{\partial w^2}.
\end{eqnarray*}
From them, we can thus compute
\begin{eqnarray*}
R(\partial_{w^1},\partial_{w^2},\partial_{w^1},\partial_{w^2}) & =
& \frac{1}{2}\frac{\partial^2 b}{\partial w^1 \partial w^1
}+\sum_a\left( \frac{1}{2}\frac{\partial s_a}{\partial
w^1}\frac{\partial r_a}{\partial
w^2}+\frac{1}{2}s_a\frac{\partial^2 r_a }{\partial w^1 \partial
w^2 }\right.\\
& + & \frac{1}{2}\left(\frac{\partial s_a}{\partial w^1}\right)^2
+\frac{1}{2}s_a\frac{\partial^2 s_a }{\partial w^1\partial w^1 }+\frac{1}{2}\frac{\partial r_a}{\partial w^1}\frac{\partial s_a}{\partial w^2}\\
& + & \left. \frac{1}{2}r_a\frac{\partial^2 s_a }{\partial
w^1\partial w^2 }-\frac{1}{2}\left(\frac{\partial r_a}{\partial
w^1}\right)^2-\frac{1}{2}r_a\frac{\partial^2 r_a}{\partial w^1
\partial w^1}\right)\\
& + & \frac{1}{2}\frac{\partial^2 b}{\partial w^2\partial
w^2}+\sum_a\left( -\frac{\partial s_a}{\partial w^2}\frac{\partial
r_a}{\partial w^1}-s_a\frac{\partial^2 r_a}{\partial w^1\partial
w^2} \right.\\
& - & \left. \frac{\partial r_a}{\partial w^2}\frac{\partial
s_a}{\partial w^1}-r_a\frac{\partial^2 r_a}{\partial w^1\partial
w^2}\right)\\
& + & \sum_as_a\left(\tfrac{1}{2}\frac{\partial^2 r_a}{\partial
w^1\partial w^2 }+ \tfrac{1}{2}\frac{\partial^2 s_a}{\partial
w^1\partial w^1}-\frac{\partial^2 r_a}{\partial w^1\partial w^2
}\right)\\
& + & \sum_ar_a\left(-\tfrac{1}{2}\frac{\partial^2 s_a}{\partial
w^1\partial w^2 }+\tfrac{1}{2}\frac{\partial^2 r_a}{\partial
w^1\partial w^1 }+\frac{\partial^2 s_a}{\partial w^1\partial
w^2 }\right).
\end{eqnarray*}
Using (\ref{Cauchy-Riemann}) we obtain
$$
R(\partial_{w^1},\partial_{w^2},\partial_{w^1},\partial_{w^2})=\frac{1}{2}\Delta
b,
$$
where $\Delta$ is the Laplace operator with respect to the variables $(w^1,w^2)$, so the curvature is
\begin{equation}
\label{11}
R=\frac{1}{2}\Delta b(dw^1\wedge dw^2)\otimes(dw^1\wedge
dw^2).
\end{equation}
One can see that all Christoffel symbols of the form
$\Gamma^{w^k}_{\cdot\cdot}$, $k=1,2$, vanish, whence the covariant
derivative of $R$ is
\begin{eqnarray}
\label{22}
 \nabla R & = & \frac{1}{2}\frac{\partial}{\partial
w^1}(\Delta
b)dw^1\otimes(dw^1\wedge dw^2)\otimes(dw^1\wedge dw^2)\\
\nonumber & + & \frac{1}{2}\frac{\partial}{\partial w^2}(\Delta
b)dw^2\otimes(dw^1\wedge dw^2)\otimes(dw^1\wedge dw^2).
\end{eqnarray}
On the other hand, solving for $\theta$ in (\ref{equatio dw1 dw2})
gives
\begin{equation}
\label{33} \theta=-\frac{1}{(w^1)^2+(w^2)^2}(w^1dw^1+w^2dw^2).
\end{equation}
Taking into account the relation \eqref{11}, \eqref{22} and
\eqref{33} in the equation $\nabla R=4\theta\otimes R$ given in
Lemma \ref{Lemma BGO}, we have
the system of partial differential equations
\begin{eqnarray*}
\frac{\partial}{\partial w^1}(\Delta b) & = &
\frac{-4w^1}{(w^1)^2+(w^2)^2}\Delta b,\\
\frac{\partial}{\partial w^2}(\Delta b) & = &
\frac{-4w^2}{(w^1)^2+(w^2)^2}\Delta b,
\end{eqnarray*}
which can be integrated to give the Poisson equation
$$\Delta b=\frac{b_0}{\left((w^1)^2+(w^2)^2\right)^2},$$
for some constant $b_0\in\R.$
\end{proof}

\bigskip

\begin{corollary}
The curvature of the metric $g$ with respect to the coordinates
$\{w^1,w^2,z^1,z^2,x^a,y^a\}$ is
\begin{equation}\label{curvatura (0,4)}
R=\frac{1}{2}\frac{b_0}{\left((w^1)^2+(w^2)^2\right)^2}(dw^1\wedge
dw^2)\otimes(dw^1\wedge dw^2)\qquad b_0\in\bb{R},
\end{equation}
hence $(M,g,J)$ is a pseudo-K\"ahler Ricci-flat manifold.
\end{corollary}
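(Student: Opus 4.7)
The plan is to observe that both claims follow almost immediately from computations already carried out inside the proof of Theorem \ref{prop princ}; no new ingredients are required. For the curvature formula, equation \eqref{11} in that proof asserts
\[
R = \tfrac{1}{2}\Delta b\,(dw^1\wedge dw^2)\otimes(dw^1\wedge dw^2),
\]
and the Poisson equation derived at the very end of the same proof gives
$\Delta b = b_0/((w^1)^2+(w^2)^2)^2$ for some $b_0\in\mathbb{R}$. Substituting the second identity into the first produces exactly \eqref{curvatura (0,4)}. So step one is just to quote these two pieces and combine them.

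For Ricci-flatness, the strategy is to take the metric trace of \eqref{curvatura (0,4)}. The Ricci tensor is $\mathrm{Ric}_{\mu\nu} = g^{\alpha\beta}R_{\alpha\mu\beta\nu}$ (or equivalently a trace of the $(1,3)$-curvature over the first and third slots). By \eqref{curvatura (0,4)} the only nonvanishing components of the $(0,4)$-curvature are $R_{w^iw^jw^kw^l}$ with $\{i,j\}=\{k,l\}=\{1,2\}$. Hence a nonzero contribution to $\mathrm{Ric}_{\mu\nu}$ forces the two contracted indices $\alpha,\beta$ to lie in $\{w^1,w^2\}$ and also forces $\mu,\nu\in\{w^1,w^2\}$. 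But an inspection of the inverse metric displayed in \eqref{inverse metric} shows that $g^{w^iw^j}=0$ for all $i,j\in\{1,2\}$: the $w$--$w$ block of $g^{-1}$ vanishes, with the only nonzero entries involving the $w^i$ index being $g^{w^iz^i}=1$. Therefore every term in the trace is zero and $\mathrm{Ric}\equiv 0$.

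There is really no obstacle here; the content of the corollary is packaged into two observations already present in the proof of Theorem \ref{prop princ}, together with the elementary fact that the $w$--$w$ block of the inverse metric is zero. If anything, the only mild subtlety is making explicit why the trace against the complete inverse metric picks up contributions only from the $(w^1,w^2)$--$(w^1,w^2)$ slot: this is dictated by the support of $R$ in \eqref{curvatura (0,4)}, combined with the antisymmetry $R_{\alpha\mu\beta\nu}=-R_{\mu\alpha\beta\nu}$ which forbids coincidences $\alpha=\mu$ within $\{w^1,w^2\}$ from contributing.
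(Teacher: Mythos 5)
Your proposal is correct and follows exactly the route the paper intends: the corollary is an immediate consequence of equation \eqref{11} combined with the Poisson equation from the end of the proof of Theorem \ref{prop princ}, and Ricci-flatness follows from the vanishing of the $w$--$w$ block of the inverse metric \eqref{inverse metric}. The paper offers no separate proof, treating both points as evident from the theorem's proof, and your explicit trace argument fills in that omission accurately.
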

Note that $(M,g,J)$ is flat if and only if $b_0=0$.

\begin{corollary}
In complex coordinates $w=w^1+iw^2$, $z=z^1+iz^2$, $z^a=x^a+iy^a$,
$a=1,\ldots,n$, the metric \eqref{metric2} is expressed
\begin{eqnarray}\label{metrica comp 1}
h & = & \frac{1}{2}\left( dw\otimes d\overline{z}+dz\otimes
d\overline{w}\right)+\frac{b}{2}dw\otimes
d\overline{w}+\sum_{a=1}^n\frac{h_a}{2}dw\otimes
d\overline{z^a}\nonumber\\
& & +\sum_{a=1}^n\frac{\overline{h_a}}{2}dz^a\otimes
d\overline{w}+\frac{1}{2}\sum_{a=1}^n\epsilon_a dz^a\otimes
d\overline{z^a},
\end{eqnarray}
where $\epsilon_a=\pm 1$, $h_a:\bb{C}\to\bb{C}$ is a holomorphic
function of the variable $w$ for all $a=1,\ldots,n$, and
$b:\bb{C}\to\bb{R}$ is a function of $w$ satisfying the Poisson
equation
$$\Delta b=\frac{b_0}{||w||^4}$$
for some $b_0\in\bb{R}$.
\end{corollary}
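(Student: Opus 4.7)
The statement is essentially a repackaging of Theorem \ref{prop princ} in complex coordinates, so the plan is a direct calculation without any new geometric input. I propose to proceed in three short steps.

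First, I substitute $w^1=\tfrac{1}{2}(w+\overline{w})$, $w^2=\tfrac{1}{2i}(w-\overline{w})$, and similarly for $z=z^1+iz^2$ and $z^a=x^a+iy^a$ into the real metric \eqref{metric2}. The diagonal pieces $dw^1dw^1+dw^2dw^2$ and $dx^adx^a+dy^ady^a$, together with the cross term $dw^1dz^1+dw^2dz^2$, reassemble directly into the terms $\tfrac{b}{2}dw\otimes d\overline{w}$, $\tfrac12\sum_a\epsilon_a\,dz^a\otimes d\overline{z^a}$, and $\tfrac12(dw\otimes d\overline{z}+dz\otimes d\overline{w})$ of \eqref{metrica comp 1}. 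The mixed term $\sum_a r_a(dx^adw^1+dy^adw^2)+\sum_a s_a(dx^adw^2-dy^adw^1)$ is the only one requiring care: grouping by $dw$ and $d\overline{w}$, one finds it equals $\tfrac12\sum_a\bigl[(r_a-is_a)\,dw\otimes d\overline{z^a}+(r_a+is_a)\,dz^a\otimes d\overline{w}\bigr]$, which identifies $h_a=r_a+i(-s_a)$, matching the definition of $h_a$ already appearing in the proof of Theorem \ref{prop princ}.

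Second, I verify that $h_a$ is holomorphic in $w$. Since $r_a$ and $s_a$ depend only on $(w^1,w^2)$, the Cauchy--Riemann equations for the function $h_a=r_a-is_a$ of $w=w^1+iw^2$ read $\partial r_a/\partial w^1=-\partial s_a/\partial w^2$ and $\partial r_a/\partial w^2=\partial s_a/\partial w^1$, which are exactly the first two equations of \eqref{3 equations singular} (equivalently \eqref{Cauchy-Riemann}). In particular $\overline{h_a}$ is anti-holomorphic. The third equation of \eqref{3 equations singular} states $\Delta b=b_0/((w^1)^2+(w^2)^2)^2=b_0/\|w\|^4$, giving the Poisson equation immediately.

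The only mild obstacle is being consistent with the convention for the symmetric products $dw^idz^j$ (whether $dw^1dz^1$ abbreviates $\tfrac12(dw^1\otimes dz^1+dz^1\otimes dw^1)$ or simply $dw^1\otimes dz^1$); the coefficients $1/2$ and $b/2$ in \eqref{metrica comp 1} versus $1$ and $b$ in \eqref{metric2} are dictated entirely by this choice. Once this convention is fixed, the identification of the two expressions is completely routine and the corollary follows.
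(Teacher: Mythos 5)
Your proposal is correct and matches what the paper does: the corollary has no separate proof there because the identity for $h$ with $h_a=r_a+i(-s_a)$ holomorphic (via the Cauchy--Riemann system \eqref{Cauchy-Riemann}) and the Poisson equation for $b$ are already derived inside the proof of Theorem \ref{prop princ}, exactly as in your computation. Your remark about the symmetric-product convention governing the factors $1/2$ is a fair observation about a point the paper leaves implicit, but it does not affect the substance of the argument.
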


\subsection{Some global properties}

In this section we derive some special properties of a pseudo-K\"ahler manifold $(M,g,J)$ admitting a strongly degenerate homogeneous pseudo-K\"ahler structure of linear type:

\begin{proposition}\label{prop hol}
If $b_0\neq 0$, the holonomy algebra of $(M,g,J)$ can be
identified with the one dimensional space
$$\f{hol}=\bb{R}\left(\begin{array}{ccl}i & i & 0\\ -i & -i & 0\\ 0 & 0 & 0_n\end{array}\right)\subset\f{su}(1,1)\subset\f{su}(p,q).$$
\end{proposition}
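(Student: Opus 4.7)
The plan is to combine the Ambrose--Singer holonomy theorem \cite[Ch.\,III, \S\,9]{KN} with the recurrence $\nabla R=4\theta\otimes R$ of Lemma \ref{Lemma BGO}(3) and the pointwise expression of the curvature operator in \eqref{curvature op}.

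First, I would write $R$ in a frame $\nabla$-parallel along any smooth curve $\gamma$ from $p$ to $q$. The recurrence becomes the scalar ODE $\dot R^i{}_{jkl}(t)=4\theta(\dot\gamma(t))R^i{}_{jkl}(t)$, which integrates to $R^i{}_{jkl}(\gamma(t))=\exp\!\bigl(4\int_0^t\theta(\dot\gamma)\,ds\bigr)\,R^i{}_{jkl}(p)$. Thus, for any parallel transport $\tau$ along $\gamma$ and any $X_0,Y_0\in T_pM$, the Ambrose--Singer generator $\tau^{-1}\circ R_{\tau X_0,\tau Y_0}|_q\circ\tau$ is a real scalar multiple of $R_{X_0,Y_0}|_p$. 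Consequently $\f{hol}_p$ coincides with the real span of $\{R_{X,Y}|_p:X,Y\in T_pM\}$, which by \eqref{curvature op} and antisymmetry in $X,Y$ reduces to the real line through the single endomorphism $A:=R_{\partial_{w^1}\partial_{w^2}}|_p$. Since $R_{1212}=\tfrac12\Delta b=\tfrac{b_0}{2\Vert w\Vert^4}$ by the Poisson equation in \eqref{3 equations singular}, $A\neq 0$ precisely when $b_0\neq 0$, so $\f{hol}_p$ is one-dimensional in this regime.

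It remains to identify $A$ with the advertised element of $\f{su}(1,1)\subset\f{su}(p,q)$. Using $\partial_w=\tfrac12(\partial_{w^1}-i\partial_{w^2})$ and the analogous identity for $\partial_z$, the formulas in \eqref{curvature op} translate to $A\partial_w=iR_{1212}\,\partial_z$ and $A\partial_z=A\partial_{z^a}=0$ in the complex basis $\{\partial_w,\partial_z,\partial_{z^a}\}$ of $T^{1,0}_pM$; in particular $A$ preserves the complex 2-plane $V:=\mathrm{span}\{\partial_w,\partial_z\}$ and kills $\mathrm{span}\{\partial_{z^a}\}$. From \eqref{metrica comp 1} the restriction of $h$ to $V$ has matrix $\left(\begin{smallmatrix}b(p)/2 & 1/2\\ 1/2 & 0\end{smallmatrix}\right)$ of determinant $-1/4$, hence signature $(1,1)$. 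An elementary linear shift $\partial_w\mapsto\partial_w-\sum_a\epsilon_a h_a(p)\partial_{z^a}-\tfrac{1}{2}b(p)\partial_z$ makes $V$ Hermitian-orthogonal to $\mathrm{span}\{\partial_{z^a}\}$ and leaves $A$ unchanged on $V$ (since the shift is contained in $\ker A$); passing to the $h$-orthonormal pair $e_\pm=\partial_w\pm\partial_z$ (after the shift) together with $e_a=\sqrt{2}\,\partial_{z^a}$, a direct calculation yields $Ae_+=Ae_-=\tfrac{iR_{1212}}{2}(e_+-e_-)$ and $Ae_a=0$, so that in the orthonormal basis $(e_+,e_-,e_1,\dots,e_n)$ the matrix of $A$ is exactly $\tfrac{R_{1212}}{2}\left(\begin{smallmatrix}i & i & 0\\ -i & -i & 0\\ 0 & 0 & 0_n\end{smallmatrix}\right)$. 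Tracelessness and anti-Hermiticity with respect to $\mathrm{diag}(1,-1,\epsilon_1,\dots,\epsilon_n)$ are immediate, which establishes $\f{hol}=\bb{R}\,A\subset\f{su}(1,1)\subset\f{su}(p,q)$.

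The main obstacle is the last step. The one-dimensionality of $\f{hol}$ follows cleanly from integrating the recurrence, but producing the precise matrix in the statement hinges on choosing the right Hermitian orthonormal frame on $V$ compatible with the $J$-invariant decomposition of $T^{1,0}_pM$ and tracking the book-keeping of the linear shifts needed to eliminate the terms $h_a(p)$ and $b(p)$ at the reference point without disturbing the action of $A$.
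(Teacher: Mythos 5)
Your argument is correct, and it reaches the identification $\f{hol}=\bb{R}A$ by a genuinely different route from the paper at the crucial step. The paper does not integrate the recurrence along curves: it invokes the fact that a pseudo-K\"ahler manifold is real analytic, so that restricted, local and infinitesimal holonomy coincide, and then observes that $\nabla R=4\theta\otimes R$ forces the filtration $\f{m}_0\subset\f{m}_1\subset\cdots$ defining the infinitesimal holonomy to stabilize at $\f{m}_0=\mathrm{span}\{R_{XY}\}$. Your alternative --- writing the recurrence in a frame parallel along $\gamma$ as the scalar ODE $\dot R=4\theta(\dot\gamma)R$, so that every Ambrose--Singer generator $\tau^{-1}\circ R_{\tau X,\tau Y}\circ\tau$ is a positive multiple of $R_{XY}|_p$ --- is more elementary (no appeal to analyticity or to the infinitesimal holonomy machinery of \cite{KN}) and yields the restricted holonomy directly; this is the standard argument for recurrent manifolds and is a perfectly acceptable substitute. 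For the second step the two proofs are essentially the same computation in different clothing: the paper restricts $A$ from \eqref{holA} to $E=\mathrm{span}\{\partial_{w^1},\partial_{w^2},\partial_{z^1},\partial_{z^2}\}$ and exhibits the unitary pair $W,Z$ built from $\partial_{w^i}/\sqrt{|b|}$ and $\partial_{w^i}/\sqrt{|b|}-s\sqrt{|b|}\partial_{z^i}$, whereas you work in the complex frame and must first orthogonalize against $\mathrm{span}\{\partial_{z^a}\}$ because you keep the cross terms $h_a$ of \eqref{metrica comp 1}. One small correction there: with your shift the coefficient of $\partial_z$ should be $\tfrac12 b(p)-\tfrac12\sum_a\epsilon_a|h_a(p)|^2$ rather than $\tfrac12 b(p)$ if you want $h(\partial_w',\overline{\partial_w'})=0$ exactly (the cross terms with $\sum_a\epsilon_a h_a\partial_{z^a}$ contribute $-\tfrac12\sum_a\epsilon_a|h_a(p)|^2$); since the correction is again a real multiple of $\partial_z\in\ker A$, this is pure bookkeeping and does not affect the conclusion that $A$ takes the stated form in a unitary basis of signature $(1,1)\oplus(\epsilon_1,\dots,\epsilon_n)$.
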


\begin{proof}
The fact that the manifold is Ricci-flat implies that the
restricted holonomy is contained in $SU(p,q)$ if the signature of
$(M,g)$ is $(2p,2q)$. Moreover, in the proof of Theorem \ref{prop
princ} it is shown that the holonomy group of the neighborhood
$(U,g)$ at $p$ has two invariant subspaces:
$E=\mathrm{span}\{\partial_{w^1},\partial _{w^2},\partial
_{z^1},\partial _{z^1}\}$ and $E^{\bot}$. This means that the
local holonomy representation is decomposable into the trivial
representation on $E^{\bot}$ and a representation of a group $H
\subset SU(1,1)$ on $E$. On the other hand, since $(M,g)$ is a
complex manifold, it is in particular real analytic, so the
restricted holonomy, the local holonomy and the infinitesimal
holonomy coincides (see \cite[Vol.I, Ch. II]{KN}). Recall that the
infinitesimal holonomy algebra at $p\in M$ is defined as
$\f{hol}'=\bigcup_{k=0}^{\infty}\f{m}_k$, where
$$\f{m}_0=\mathrm{span}\{R_{XY}/X,Y\in T_pM\}$$
and
$$\f{m}_k=\mathrm{span}\left\{\f{m}_{k-1}\cup\{(\nabla_{Z_k}\ldots\nabla_{Z_1}R)_{XY}/Z_1,\ldots,Z_k,X,Y\in T_pM\}\right\}.$$
In our case, by the recurrent
formula $\nabla R=4\theta\otimes R$ we have
$\f{m}_k=\f{m}_{k-1}=\ldots=\f{m}_1=\f{m}_0$, hence
$\f{hol}'=\f{m}_0$.
In addition, by (\ref{curvature
op}) $\f{m}_0$ is the one dimensional space
$$\f{m}_0=\mathrm{span}\{R_{\partial_{w^1}\partial_{w^2}}\}=\mathrm{span}\{A\}$$
where
\begin{equation}\begin{array}{rrcl}
A: & T_pM & \to & T_pM \label{holA}\\
                                   & \partial_{w^1} & \mapsto &
                                   \partial_{z^2} \\
                                   & \partial_{w^2} & \mapsto &
                                   -\partial_{z^1}\\
                                   & \partial_{z^1},\partial_{z^2} & \mapsto & 0\\
                                   & \partial_{x^a},\partial_{y^a} & \mapsto & 0.
\end{array}\end{equation}
Let $s$ be the sign of $b$ at $p$. With respect to the unitary
basis $\{W,Z\}$ of $E$ defined as
$$W=\frac{\partial_{w^1}}{\sqrt{|b|}}-i\frac{\partial_{w^2}}{\sqrt{|b|}},
\hspace{1em}Z=\left(\frac{\partial_{w^1}}{\sqrt{|b|}}-s\sqrt{|b|}\partial_{z^1}\right)
-i\left(\frac{\partial_{w^2}}{\sqrt{|b|}}-s\sqrt{|b|}\partial_{z^2}\right),$$
the matrix of the endomorphism $A$ restricted to $E$ is
$$A_{|E}=s\begin{pmatrix}i & i\\ -i & -i\end{pmatrix}\in\f{su}(1,1).$$
\hspace{75mm}
\end{proof}

\begin{proposition}
$(M,g,J)$ is a Walker
manifold.
\end{proposition}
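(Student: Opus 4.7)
The plan is to exhibit a rank-$2$ parallel totally null distribution on $M$, since a Walker manifold is by definition a pseudo-Riemannian manifold carrying such a distribution. The natural candidate, suggested by the structure of the problem, is $D=\mathrm{span}\{\xi,J\xi\}$, where $\xi$ is the vector field attached to the strongly degenerate homogeneous structure $S$ via \eqref{degenerate}.

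First I would verify that $D$ is totally null. The assumption that $S$ is \emph{strongly degenerate} says that $\xi\neq 0$ is isotropic; since $J$ is a $g$-isometry, $J\xi$ is also isotropic, and $g(\xi,J\xi)=-g(\xi,J\xi)$ via the standard K\"ahler identity $g(JU,JV)=g(U,V)$ applied to $U=J\xi$, $V=\xi$, forcing $g(\xi,J\xi)=0$. Thus $D$ is a $2$-plane of totally null vectors at every point where $\xi$ does not vanish. Second, I would show $D$ is $\nabla$-invariant. Starting from part (1) of Lemma \ref{Lemma BGO}, namely $\nabla\theta=\theta\otimes\theta-(\theta\circ J)\otimes(\theta\circ J)$, and using that $(\nabla_X\theta)(Y)=g(\nabla_X\xi,Y)$ together with $(\theta\circ J)(Y)=g(JY,\xi)=-g(Y,J\xi)$, a direct metric dualisation gives
$$\nabla_X\xi=\theta(X)\,\xi+(\theta\circ J)(X)\,J\xi\in D.$$
Since $\nabla J=0$, applying $J$ yields $\nabla_X(J\xi)=\theta(X)\,J\xi-(\theta\circ J)(X)\,\xi\in D$. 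Hence $D$ is parallel.

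The only remaining point is to make sure $D$ has constant rank $2$, i.e.\ that $\xi$ is nowhere zero. The displayed formula $\nabla_X\xi=\theta(X)\xi+(\theta\circ J)(X)J\xi$ shows that $\xi$ satisfies, along any curve, a linear ODE in $\xi$ whose coefficients themselves vanish wherever $\xi$ vanishes; by uniqueness, the set $\{\xi=0\}$ is open and closed, so under the running assumption $\xi\neq 0$ it must be empty. Consequently $D$ is a globally defined, rank-$2$, parallel, totally null, $J$-invariant distribution on $M$, and $(M,g,J)$ is a Walker manifold. There is no genuine obstacle in this argument; the only thing one must be careful with is the correct sign when rewriting the Lemma \ref{Lemma BGO}(1) identity as a formula for $\nabla\xi$.
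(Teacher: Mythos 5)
Your proof is correct, and it takes a more intrinsic route than the paper's. The paper proves this proposition by recycling machinery already built in the proof of Theorem \ref{prop princ}: there one constructs $w=e^{-v}$ with $\nabla dw=0$, whence the metrically dual vector fields $Z=\partial_{z^1}$, $JZ=\partial_{z^2}$ are parallel, null, and (since $dw^1,dw^2$ are invertible combinations of $\theta$ and $\theta\circ J$) span exactly your distribution $\mathrm{span}\{\xi,J\xi\}$; the paper then just observes that this span is a null parallel plane field. You instead work directly with $\xi$ and $J\xi$, dualising Lemma \ref{Lemma BGO}(1) to get $\nabla_X\xi=\theta(X)\xi+(\theta\circ J)(X)J\xi$ and applying $\nabla J=0$; your sign bookkeeping is right, and the total nullity check ($g(\xi,J\xi)=0$ from the Hermitian identity) is correct. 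What the paper's route buys is the stronger local statement that the distribution is spanned by genuinely \emph{parallel} vector fields (not merely that it is a parallel distribution), at the cost of depending on the whole construction of $w$; what yours buys is self-containedness, needing only Lemma \ref{Lemma BGO}(1), plus the nice bonus observation (implicit in the paper's standing convention $\xi\neq 0$) that the ODE $\nabla_{\dot\gamma}\xi=\theta(\dot\gamma)\xi+(\theta\circ J)(\dot\gamma)J\xi$ forces the zero set of $\xi$ to be open and closed, so the distribution has constant rank $2$ on a connected $M$. Both arguments are sound.
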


\begin{proof}
 Recall that the distribution of $2$-planes spanned by
$\partial_{z^1}$ and $\partial_{z^2}$ is invariant by holonomy.
Since $\xi$ and $J\xi$ are linear combination of $\partial_{z^1}$
and $\partial_{z^2}$, these vector fields generates a null
parallel distribution. Therefore, $(M,g,J)$ is a Walker
manifold (see \cite{BGGNV}).
\end{proof}

\bigskip

A \textit{scalar invariant} is a scalar function obtained by fully contracting the curvature $R$ and its derivatives $\nabla^k R$ with the metric and the inverse of the metric.

\begin{proposition}\label{proposition VSI}
$(M,g,J)$ is VSI (vanishing scalar invariants).
\end{proposition}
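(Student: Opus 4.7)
The plan is to exploit two structural facts established during the proof of Theorem \ref{prop princ}: as a $(0,4)$-tensor, the curvature $R$ has (up to symmetries) the single nonvanishing coordinate component $R_{w^1 w^2 w^1 w^2}$, hence it lies in $(\mathrm{span}\{dw^1, dw^2\})^{\otimes 4}$; and in the chosen coordinates all Christoffel symbols of the form $\Gamma^{w^k}_{\cdot \cdot}$ ($k=1,2$) vanish identically. Combined with $g^{w^i w^j} = 0$ (the upper-left $2 \times 2$ block of the inverse metric \eqref{inverse metric} is zero), these facts will force every full contraction of $\nabla^k R$ against the metric to vanish.

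First I would prove by induction on $k$ that $(\nabla^k R)_{\mu_0 \mu_1 \cdots \mu_{k+3}} = 0$ unless every index $\mu_i$ lies in $\{w^1, w^2\}$. The base case is \eqref{curvatura (0,4)}. For the inductive step, writing $T=\nabla^{k-1}R$ and using
$$(\nabla_{\mu_0} T)_{\mu_1 \cdots \mu_{k+3}} = \partial_{\mu_0} T_{\mu_1 \cdots \mu_{k+3}} - \sum_{i=1}^{k+3} \Gamma^{\sigma}_{\mu_0 \mu_i} T_{\mu_1 \cdots \sigma \cdots \mu_{k+3}},$$
the induction hypothesis applied to $T$ forces $\sigma$ to be a $w$-index for the second sum to contribute, and then $\Gamma^{w^k}_{\mu_0 \mu_i}$ vanishes; the first term acts on $T_{\mu_1 \cdots \mu_{k+3}}$, a function of $(w^1, w^2)$ alone (all coefficients in \eqref{metric real coord} depend only on these two variables), so it is zero unless $\mu_0 \in \{w^1, w^2\}$. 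An alternative route is to iterate $\nabla R = 4\theta \otimes R$ from Lemma \ref{Lemma BGO} together with $\nabla \theta = \theta\otimes\theta - (\theta\circ J)\otimes(\theta\circ J)$, noting that by \eqref{33} both $\theta$ and $\theta\circ J$ lie in $\mathrm{span}\{dw^1, dw^2\}$; this inductively places every $\nabla^k R$ in $(\mathrm{span}\{dw^1, dw^2\})^{\otimes (k+4)}$.

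Once this is established, any scalar invariant appears as a linear combination of full contractions of tensor products $\nabla^{k_1} R \otimes \cdots \otimes \nabla^{k_m} R$ against products of $g_{\mu\nu}$ and $g^{\mu\nu}$. For such a term to survive, every lower index of every $\nabla^{k_i} R$ factor must be a $w$-index; but since these tensors carry only lower indices, producing a scalar forces some $g^{\mu\nu}$ factor to contract two of them, giving $g^{w^p w^q}=0$. Consequently every scalar invariant vanishes.

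The main obstacle is the bookkeeping in the inductive step; once the two essential facts ($w$-only curvature and vanishing of $\Gamma^{w^k}_{ij}$) are isolated, the argument is routine. The flat case $b_0 = 0$ is immediate, and the overall structure parallels the well-known VSI property of pp-waves in the Lorentzian setting.
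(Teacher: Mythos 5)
Your argument is correct and follows essentially the same route as the paper's (one-line) proof: the curvature and, by your induction, all of its covariant derivatives live in $(\mathrm{span}\{dw^1,dw^2\})^{\otimes k}$, while the inverse metric satisfies $g^{w^iw^j}=0$, so every full contraction vanishes. You merely make explicit the induction on $\nabla^kR$ (via the vanishing of the Christoffel symbols $\Gamma^{w^k}_{\cdot\cdot}$ and the $w$-dependence of the components) that the paper leaves as ``it is easy to see.''
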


\begin{proof}
Since the curvature (\ref{curvatura (0,4)}) only involves $dw^1$ and $dw^2$, it is easy to see that the  inverse metric (\ref{inverse metric}) forces all possible scalar invariants to vanish.
\end{proof}

\begin{proposition}
$(M,g,J)$ is an \textit{Osserman manifold} with a $2$-step nilpotent Jacobi operator.
\end{proposition}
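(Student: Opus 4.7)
The plan is to exploit the very rigid rank-one form of the curvature in \eqref{curvatura (0,4)}. Writing $\omega:=dw^1\wedge dw^2$ and $f:=\dfrac{b_0}{2((w^1)^2+(w^2)^2)^2}$, the curvature is $R=f\,\omega\otimes\omega$, so the Jacobi operator $R_X Y:=R(Y,X)X$ is determined by
\[
 g(R_X Y,W)=R(Y,X,X,W)=f\,\omega(Y,X)\,\omega(X,W).
\]
My goal is to establish $R_X^{\,2}=0$ for every $X\in T_pM$. This single identity will deliver both claims at once: it is exactly the $2$-step nilpotency of the Jacobi operator, and it forces the characteristic polynomial of $R_X$ to be $\lambda^{\dim M}$ for every $X$, so the spectrum is constantly $\{0\}$ on the unit pseudo-sphere, which is the Osserman condition.

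To prove $R_X^{\,2}=0$, I would first identify the image of $R_X$. From the inverse metric \eqref{inverse metric} one reads off $(dw^1)^\sharp=\partial_{z^1}$ and $(dw^2)^\sharp=\partial_{z^2}$, so the sharp of any $1$-form in $\mathrm{span}\{dw^1,dw^2\}$ lies in $N:=\mathrm{span}\{\partial_{z^1},\partial_{z^2}\}$. The displayed identity above shows that $g(R_X Y,\cdot)$ is a scalar multiple of $\omega(X,\cdot)=X^{w^1}dw^2-X^{w^2}dw^1$, hence $\mathrm{Im}(R_X)\subset N$ for every $X$. Conversely, any $V\in N$ has no $\partial_{w^1}$- or $\partial_{w^2}$-component, so $\omega(V,X)=0$ and therefore $R_X V=0$. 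Combining the two facts gives $R_X(R_X Y)=0$ for every $Y$, i.e.\ $R_X^{\,2}=0$.

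No serious obstacle is anticipated: the whole argument collapses to the elementary observation that the image of $R_X$ is built from the $\sharp$ of $\omega$, on which $\omega$ itself vanishes. The only point worth noting to obtain the Osserman condition in its standard form is that $R_X$ preserves $X^\perp$ for non-null $X$, since $g(R_X Y,X)=R(Y,X,X,X)=0$ by antisymmetry in the last two arguments; thus the nilpotency established on $T_pM$ restricts to $X^\perp$, and the spectrum of $R_X|_{X^\perp}$ is $\{0\}$ independently of the unit vector $X$. Note also that $R_X$ is genuinely nonzero (and hence genuinely $2$-step nilpotent) precisely when $b_0\neq 0$, in agreement with the flatness criterion following \eqref{curvatura (0,4)}.
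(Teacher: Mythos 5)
Your proof is correct. It takes a slightly different, and in fact more complete, route than the paper's. The paper simply writes out the matrices of $\mathrm{J}(\partial_{w^1})$ and $\mathrm{J}(\partial_{w^2})$ in the coordinate basis (each has a single nonzero entry, mapping into $\mathrm{span}\{\partial_{z^1},\partial_{z^2}\}$) and observes that the Jacobi operators of the remaining basis vectors vanish; the nilpotency and the constancy of the (zero) spectrum are then read off from these matrices. Since $\mathrm{J}(X)$ is quadratic in $X$, that basis computation does not by itself cover an arbitrary direction $X$, whereas your rank-one argument --- $R=f\,\omega\otimes\omega$ with $\omega=dw^1\wedge dw^2$, so $\mathrm{Im}\,\mathrm{J}(X)\subset\mathrm{span}\{(dw^1)^\sharp,(dw^2)^\sharp\}=\mathrm{span}\{\partial_{z^1},\partial_{z^2}\}=\ker\omega(\cdot,X)\subset\ker\mathrm{J}(X)$ --- establishes $\mathrm{J}(X)^2=0$ for every tangent vector at once, in a coordinate-free way. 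Both arguments ultimately rest on the same fact, namely that the only nonvanishing curvature component is $R_{w^1w^2w^1w^2}$ while $(dw^i)^\sharp$ is annihilated by $dw^1$ and $dw^2$; your version packages this invariantly and also spells out the passage from $\mathrm{J}(X)^2=0$ to the Osserman condition on $X^\perp$, which the paper leaves implicit.
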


\begin{proof}
The Jacobi operators, that is $\mathrm{J}(X): Y\mapsto R_{YX}X$, of the elements of the basis $\{\partial_{w^1},\partial_{w^2},\partial_{z^1},\partial_{z^2},\partial_{x^a},\partial_{y^a}\}$ are
$$\mathrm{J}(\partial_{w^1})=\left(\begin{array}{ccccl}0 & 0 & 0 & 0 & 0\\ 0 & 0 & 0 & 0 & 0\\ 0 & 0 & 0 & 0 & 0\\
0 & -\frac{b_0}{2||w||^4} & 0 & 0 & 0\\ 0 & 0 & 0 & 0 & 0_{2n}\end{array}\right),$$
$$\mathrm{J}(\partial_{w^2})\left(\begin{array}{ccccl}0 & 0 & 0 & 0 & 0\\ 0 & 0 & 0 & 0 & 0\\ -\frac{b_0}{2||w||^4} & 0 & 0 & 0 & 0\\
0 & 0 & 0 & 0 & 0\\ 0 & 0 & 0 & 0 & 0_{2n}\end{array}\right),$$
$$\mathrm{J}(\partial_{z^1})=0,\qquad \mathrm{J}(\partial_{z^2})=0,\qquad \mathrm{J}(\partial_{x^a})=0,\qquad \mathrm{J}(\partial_{y^a})=0.$$
\hspace{100mm}
\end{proof}

\bigskip

\subsection{The manifold $(\bb{C}^{2+n},h)$}

Theorem \ref{prop princ} gives the local expression
(\ref{metric1}) of the metric of a manifold with a strongly
degenerate homogeneous K\"ahler structure of linear type. This
motivates the study of the space $\bb{C}^{2+n}$ endowed with this
particular pseudo-K\"ahler metric, which can thus be understood as
the simplest instance of this type of manifolds. In particular,
the goal of this section is to study the singular nature of this
space, and to explicitly exhibit the corresponding strongly
degenerate homogeneous pseudo-K\"ahler structures of linear type.
In addition, this results will be applied in \S \ref{secwave}.

We thus consider $(\bb{C}^{2+n},h)$ with the standard complex
structure of $\bb{C}^{2+n}$ and the pseudo-K\"ahler metric $h$
given in \eqref{metrica comp 1}. With respect to the standard real
coordinates $\{w^1,w^2,z^1,z^2,x^a,y^a\}$, $a=1,\ldots,n$, of
$\bb{C}^{2+n}\equiv\bb{R}^{4+2n}$, the metric has the form
\eqref{metric real coord}
where the functions $b,r_a,s_a$ only depend on the variables $w^1$
and $w^2$ and satisfy
\begin{equation}\label{3 equations}
\left.
\begin{array}{l}
\dfrac{\partial s_a}{\partial w^1}=\dfrac{\partial r_a}{\partial
w^2}\vspace{2mm}\\
\dfrac{\partial s_a}{\partial w^2}=-\dfrac{\partial r_a}{\partial
w^1}\vspace{2mm}\\
\Delta b=\dfrac{b_0}{\left((w^1)^2+(w^2)^2\right)^2},\hspace{1em}
b_0\in\bb{R}.
\end{array}\right\}
\end{equation}
Note, that the $(0,3)$-Riemann curvature tensor is
\begin{eqnarray*}
R
&=&\frac{1}{2}\frac{b_0}{\left((w^1)^2+(w^2)^2\right)^2}\left((dw^1\wedge
dw^2)\otimes(dw^1\otimes \partial_{z^2})\right.\\
& & \hspace{27mm}\left.-(dw^1\wedge dw^2)\otimes(dw^2\otimes
\partial_{z^1})\right),
\end{eqnarray*}
so if $b_0\neq 0$ it exhibits a singular behavior at
$(w^1,w^2)=(0,0)$. The set $\{w^1=w^2=0\}$ can be understood as a
singularity of $g$ in the cosmological sense:
\begin{enumerate}
\item The geodesic deviation equation is governed by the
components of the curvature tensor $R_{w^1w^2w^i}^{z^j}$,
$i,j=1,2$, making the tidal forces infinite at $\{w^1=w^2=0\}$.

\item The geodesic equation for the variables $(w^1,w^2)$ are
\[
\ddot{w}^1=0,\qquad \ddot{w}^2=0.
\]
Then, geodesics with initial value
$$(w^1(0),w^2(0))=(a,b),\qquad (\dot{w}^1(0),\dot{w}^2(0))=(c,cb/a),$$ with $b\in\bb{R}$,
$a,c\in\bb{R}-\{0\}$, are
$$w^1(t)=ct+a,\qquad w^2(t)=c\frac{b}{a}t+b,$$
and geodesics with initial value $$(w^1(0),w^2(0))=(0,b), \qquad
(\dot{w}^1(0),\dot{w}^2(0))=(0,d),$$ with $b,d\in\bb{R}-\{0\}$,
are
$$w^1(t)=0,\qquad w^2(t)=dt+b.$$ These geodesics reach the singular
set $\{w^1=w^2=0\}$ in finite time $t=-\frac{a}{c}$ and
$t=-\frac{b}{d}$ respectively. Hence
$(\bb{C}^{2+n}-\{w^1=w^2=0\},h)$ is not geodesically complete.
\end{enumerate}

\bigskip

\bigskip

There exists the possibility that this singularity is due to a bad
choice of coordinates so that we could embed
$(\C^{2+n}-\{w^1=w^2=0\},h)$ in a complete manifold
$(\hat{M},\hat{h})$ with $\hat{h}$ of class $\mathcal{C}^2$ . To
see that this is actually not possible, we can compute a component
of the curvature tensor with respect to an orthonormal parallel
frame along a curve reaching the singular set in finite time, and
see that it is singular (see \cite{Sen}). Indeed, let $\gamma$ be
the geodesic with initial value $\gamma(0)=(1,0,\ldots,0)$ and
$\dot{\gamma}=(-1,0,\ldots,0)$. We have seen that this geodesic is
of the form
$$\gamma(t)=(1-t,0,z^1(t),z^2(t),x^a(t),y^a(t))$$
for some functions $z^1(t),z^2(t),x^a(t),y^a(t)$, $a=1,\ldots,n$.
Let
$$E(t)=W^1(t)\partial_{w^1}+W^2(t)\partial_{w^2}+Z^1(t)\partial_{z^1}+Z^2(t)\partial_{z^2}+X^a(t)\partial_{x^a}+Y^a(t)\partial_{y^a}$$
be a vector field along $\gamma$. $E$ is parallel, i.e.
$\nabla_{\dot{\gamma}}E=0$, if the following equations hold:
\begin{align*}
0=\dot{W}^1,& \qquad 0=\dot{W}^2,\\
0=\dot{Z}^1-W^1\Gamma_{w^1w^1}^{z^1}-W^2\Gamma_{w^1w^2}^{z^1},&
\qquad
0=\dot{Z}^2-W^1\Gamma_{w^1w^1}^{z^2}-W^2\Gamma_{w^1w^2}^{z^2},\\
0=\dot{X}^a-W^1\Gamma_{w^1w^1}^{x^a}-W^2\Gamma_{w^1w^2}^{x^a},&
\qquad
0=\dot{Y}^a-W^1\Gamma_{w^1w^1}^{y^a}-W^2\Gamma_{w^1w^2}^{y^a}.
\end{align*}
We can thus obtain an orthonormal parallel frame
$\{E_1(t),\ldots,E_{4+2n}(t)\}$ with $E_1(t)$ and $E_2(t)$ of the
form
\begin{align*}
E_1(t)=& \frac{1}{\sqrt{|b(0)|}}\partial_{w^1}+Z_1^1(t)\partial_{z^1}+Z_1^2(t)\partial_{z^2}+X_1^a(t)\partial_{x^a}+Y_1^a\partial_{y^a},\\
E_2(t)=&
\frac{1}{\sqrt{|b(0)|}}\partial_{w^2}+Z_2^1(t)\partial_{z^1}+Z_2^2(t)\partial_{z^2}+X_2^a(t)\partial_{x^a}+Y_2^a\partial_{y^a},
\end{align*}
where $E_1(0)=\frac{1}{\sqrt{|b(0)|}}\partial_{w^1}$,
$E_2(0)=\frac{1}{\sqrt{|b(0)|}}\partial_{w^2}$, and
$b(0)=b(1,0,\ldots,0)$. The value of the curvature tensor applied
to $E_1(t),E_2(t)$ is
$$R_{E_1(t)E_2(t)E_1(t)E_2(t)}=\frac{b_0}{2b(0)^2}\frac{1}{(w^1(t)^2+w^2(t)^2)^2}=\frac{b_0}{2b(0)^2}\frac{1}{(1-t)^4},$$
which is singular at $t=1$, that is when $\gamma$ reaches the
singular set $\{w^1=w^2=0\}$.

\bigskip

Finally we show that strongly degenerate homogeneous
pseudo-K\"ahler structures of linear type indeed exist and are
realized in the manifold $(\bb{C}^{2+n}-\{w^1=w^2=0\},h)$.
\begin{proposition}
For every data $(b,b_0,r_a,s_a)$, $a=1,\ldots,n$, satisfying
\eqref{3 equations}, the pseudo-K\"ahler manifold
$(\bb{C}^{2+n}-\{w^1=w^2=0\},h)$ admits a strongly degenerate
pseudo-K\"ahler homogeneous structure of linear type.
\end{proposition}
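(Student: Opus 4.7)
The plan is to exhibit the strongly degenerate homogeneous pseudo-K\"ahler structure of linear type explicitly on the open set $\bb{C}^{n+2}\setminus\{w^1=w^2=0\}$ and then verify the Ambrose-Singer equations \eqref{AS+J}. First, I set
$$\theta = -\frac{1}{(w^1)^2+(w^2)^2}(w^1 dw^1 + w^2 dw^2)$$
as in \eqref{33}, let $\xi$ be its metric dual, and take $\zeta=0$; then define $S$ by formula \eqref{degenerate}. Inspecting the inverse metric \eqref{inverse metric}, the vector $\xi$ lies in the span of $\partial_{z^1},\partial_{z^2}$, which is totally isotropic, so $\xi$ and $J\xi$ are isotropic and mutually orthogonal, matching the algebraic requirements of the strongly degenerate case.

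The Ambrose-Singer equations are then checked one by one. The identities $\wnabla g=0$ and $\wnabla J=0$ are purely algebraic consequences of the form of $S$ in the class $\mathcal{K}_2$: a short calculation gives $S_{XYZ}+S_{XZY}=0$ and $S_X\circ J=J\circ S_X$, using the $g$-orthogonality of $J$ together with $g(\xi,\xi)=g(\xi,J\xi)=0$. The condition $\wnabla\xi=0$ is, via $\theta(X)=g(X,\xi)$, equivalent to
$$\nabla\theta=\theta\otimes\theta-(\theta\circ J)\otimes(\theta\circ J),$$
which follows from a direct coordinate computation: using \eqref{equatio dw1 dw2} and the fact that $Z=\partial_{z^1}$ and $JZ=\partial_{z^2}$ are parallel (so $\nabla dw^1=\nabla dw^2=0$), one differentiates the explicit formula for $\theta$ and collects terms. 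Since $\zeta=0$, the condition $\wnabla\zeta=0$ is automatic.

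The principal step is $\wnabla R=0$, equivalently $\nabla R = S\cdot R$, where $S_X$ acts as a derivation on the $(0,4)$-tensor $R$. From the proof of Theorem \ref{prop princ} one already has $R=\tfrac{1}{2}\Delta b\,(dw^1\wedge dw^2)\otimes(dw^1\wedge dw^2)$ with $\Delta b = b_0/((w^1)^2+(w^2)^2)^2$, together with $\nabla R = 4\theta\otimes R$. It therefore suffices to show $S\cdot R = 4\theta\otimes R$. The key observation is that $\xi$ and $J\xi$ lie in the span of $\partial_{z^1},\partial_{z^2}$, which is annihilated by every curvature operator by \eqref{curvature op}; hence in the formula $S_XY=g(X,Y)\xi-\theta(Y)X-g(X,JY)J\xi+(\theta\circ J)(Y)JX$, only the terms $-\theta(Y)X$ and $(\theta\circ J)(Y)JX$ survive when $S_XY$ is placed into an argument of $R$. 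Collecting the four derivation contributions and using $(\theta\circ J)(JX)=-\theta(X)$ together with the $J$-invariance of $R$ should collapse the expression to $4\theta(X)R(Y,Z,W,V)$.

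The main obstacle is the bookkeeping in this final step, where eight terms arising from the four derivation slots of $R$ must be combined; however, the very restricted support of $R$ (nonzero only on $dw^1,dw^2$ in each pair of slots) reduces the identity to a single scalar balance, which matches the scalar content of the relation $\nabla R = 4\theta\otimes R$ already established in the proof of Theorem \ref{prop princ}. Once this is in place, the four Ambrose-Singer conditions are verified, and the proposition follows.
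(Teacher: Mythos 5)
Your proposal is correct and follows essentially the same route as the paper: the paper's proof simply exhibits $\xi=\tfrac{-1}{(w^1)^2+(w^2)^2}(w^1\partial_{z^1}+w^2\partial_{z^2})$ (which is exactly your $\theta^{\sharp}$), defines $S$ by \eqref{degenerate}, and declares $\wnabla\xi=0$ and $\wnabla R=0$ to be a straightforward computation. Your version merely fills in the details the paper omits -- the algebraic reductions $\wnabla g=\wnabla J=0$, the equivalence $\wnabla\xi=0\Leftrightarrow\nabla\theta=\theta\otimes\theta-(\theta\circ J)\otimes(\theta\circ J)$, and the balance $S\cdot R=4\theta\otimes R$ against $\nabla R=4\theta\otimes R$ (which indeed follows directly from \eqref{11}, \eqref{22}, \eqref{33} and $\Delta b=b_0/\rho^2$ without any circularity) -- and these all check out.
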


\begin{proof}
Let $S$ be the tensor field
$$S_XY=g(X,Y)\xi-g(Y,\xi)X-g(X,JY)J\xi+g(JY,\xi)JX,$$
with
$$\xi=\frac{-1}{(w^1)^2+(w^2)^2}(w^1\partial_{z^1}+w^2\partial_{z^2}).$$
It is a straightforward computation to see that $\wnabla \xi=0$
and $\wnabla R=0$, where $\wnabla=\nabla-S$, so that $S$ satisfies
equations \eqref{AS+J}.
\end{proof}

\bigskip


\section{The homogeneous model for a strongly degenerate homogeneous structure of linear
type}\label{section homogeneous model}

Let $(M,g,J)$ be a reductive homogeneous pseudo-K\"ahler manifold
admitting a strongly degenerate homogeneous structure of linear
type $S$. From \cite{TV} one can construct a Lie algebra of
infitesimal isometries associated to $S$. This algebra is (fixing
a point $p\in M$ as the origin)
$$\f{g}=T_pM\oplus \f{hol}^{\widetilde{\nabla}}$$
where $\widetilde{\nabla}=\nabla-S$ is the canonical connection
associated to the homogeneous structure tensor $S$. The brackets
in $\f{g}$ are
$$\left\{\begin{array}{lcll}
\left[A,B\right] & = & AB-BA, & A,B\in \f{hol}^{\widetilde{\nabla}}\\
\left[A,\eta\right] & = & A\cdot\eta, &
A\in \f{hol}^{\widetilde{\nabla}},\eta\in T_pM\\
\left[\eta,\zeta\right] & = &
S_{\eta}\zeta-S_{\zeta}\eta+\widetilde{R}_{\eta\zeta}, &
\eta,\zeta\in T_pM,
\end{array}\right.$$
 where $\widetilde{R}$ is the
curvature tensor of $\widetilde{\nabla}$. This curvature tensor
can be computed as $\widetilde{R}=R-R^S$ where
$$R^S_{XY}Z=\left[S_X,S_Y\right]Z-S_{S_XY-S_YX}Z.$$
Let $G$ be a Lie group with Lie algebra $\f{g}$, and let $H$ be
the connected Lie subgroup with Lie algebra
$\f{hol}^{\widetilde{\nabla}}$. If $H$ is closed in $G$, then
$G/H$ is called a \textit{homogeneous model} for $M$, which means that $(M,g,J)$
is locally holomorphically isometric to $G/H$ with the $G$-invariant
metric and complex structure given by $g$ and $J$ at $T_pM$.

By direct calculation from (\ref{degenerate}) one finds
$$R^S_{XY}Z=-2g(X,JY)\left(g(\xi,JZ)\xi+g(Z,\xi)J\xi\right),$$
which is readily seen to equal
$$\left(R^S_p\right)_{XY}=-2g(X,JY)A,$$
where $A$ is the endomorphism given in (\ref{holA}). In terms of
the coordinate system $\{w^1,w^2,z^1,z^2,x^a,y^a\}$ around $p$
described in Theorem \ref{prop princ} (with $p$ sent to
$(1,0,\ldots,0)\in\bb{C}^{n+2}$), and taking into account
(\ref{curvature op}), we can write
$$\widetilde{R}_p=\left(\frac{b_0}{2}dw^1\wedge dw^2+2\omega_p\right)\otimes A,$$
where $\omega$ is the K\"ahler form associated to $(g,J)$. This
means that $\f{hol}^{\widetilde{\nabla}}=\f{hol}\simeq
\f{u}(1)\subset \f{su}(1,1)$.

The (real) Lie algebra $\f{g}$ is then
$$\f{g}\simeq \f{u}(1)\oplus\bb{C}^2\oplus\bb{C}^n,$$ where
$\bb{C}^2=\bb{C}\{\partial_{w^1},\partial_{z^1}\}$ and
$\bb{C}^n=\bb{C}\{\partial_{x^a},a=1,\ldots,n\}$. Setting
$B=\partial_{z^2}-A$, the brackets are
\begin{equation}\label{Lie algebra g}
\begin{array}{l}
\left[A_1,A_2\right]=0,\\
\left[B,\partial_{z^1}\right]=0,\hspace{1em}\left[B,\partial_{z^2}\right]=0,\hspace{1em}\left[B,\partial_{x^a}\right]=0,\hspace{1em}\left[B,\partial_{y^a}\right]=0,\\
\left[B,\partial_{w^1}\right]=2B,\hspace{1em}\left[B,\partial_{w^2}\right]=0,\\
\left[\partial_{z^1},\partial_{z^2}\right]=0,\hspace{1em}\left[\partial_{w^1},\partial_{w^2}\right]=\left(-2b(p)+\frac{1}{2}b_0\right)B-\frac{1}{2}b_0\partial_{z^2}\\
\left[\partial_{z^1},\partial_{w^1}\right]=\partial_{z^1},\hspace{1em}\left[\partial_{z^1},\partial_{w^2}\right]=\partial_{z^2}-2B,\\
\left[\partial_{z^2},\partial_{w^1}\right]=\partial_{z^2}+2B,\hspace{1em}\left[\partial_{z^2},\partial_{w^2}\right]=-\partial_{z^1},\\
\left[\partial_{x^a},\partial_{x^b}\right]=\left[\partial_{y^a},\partial_{y^b}\right]=0,\\
\left[\partial_{x^a},\partial_{y^b}\right]=-2\delta^a_bB,\\
\left[\partial_{z^k},\partial_{x^a}\right]=\left[\partial_{z^k},\partial_{y^a}\right]=0,\hspace{1em}
k=1,2,\\
\left[\partial_{w^1},\partial_{x^a}\right]=-\partial_{x^a},\hspace{1em}\left[\partial_{w^1},\partial_{y^a}\right]=-\partial_{y^a},\\
\left[\partial_{w^2},\partial_{x^a}\right]=-\partial_{y^a},\hspace{1em}\left[\partial_{w^2},\partial_{y^a}\right]=\partial_{x^a},
\end{array}\end{equation}
for $a,b=1,\ldots,n$ and $A_1,A_2\in\f{u}(1)$.

One can check that $\f{g}$ is a solvable Lie algebra with a
$2$-step nilradical
$\f{n}=\bb{R}\left\{A,\partial_{z^1},\partial_{z^2},\partial_{x^a},\partial_{y^a},a=1,\ldots,n\right\}$.
Note also that $\f{g}$ contains a Heisenberg algebra
$\f{h}=\bb{R}\left\{B,\partial_{x^a},\partial_{y^a},a=1,\ldots,n\right\}$.

Using Lie's Theorem (\cite{Ful}) we can obtain an upper triangular
representation of the Lie algebra $\f{g}$ as shown in Table
\ref{tabla}, where $t,w_1,w_2,z_1,z_2$,
$x_1,y_1,\ldots,x_n,y_n\in\bb{R}$, $\lambda=2b(p)$ and
$\mu=\frac{b_0}{2}$.
\begin{sidewaystable}
\centering $\begin{pmatrix}

-2w_1 & 2w_2+2w_1i & 2w_2+2w_1i & -4y_1i & -4x_1i & \ldots &
-4y_ni & -4x_ni &
2t+(\lambda-\mu)w_2+2(z_1-z_2)i & -4+(\mu-\lambda)w_1\\

0 & -w_1+w_2i &  0 & 0 & 0 & \ldots & 0 & 0 &
z_1-\frac{\mu}{2}w_2i
& \frac{\mu}{2}w_1i\\

0 & 0 & -w1-w2i & 0 & 0 & \ldots & 0 & 0 & z_2-\frac{\mu}{2}w_2i &
z_1+z_2i-\frac{\mu}{2}w_1i\\

0 & 0 & 0 & -w_1+w_2i & 0 & \ldots & 0 & 0 & x_1 & -x1i\\

0 & 0 & 0 & 0 & -w_1+w_2i & \ldots & 0 & 0 & y_1 & y_1i\\

\vdots  & \vdots & \vdots & \vdots & \vdots & \ddots & \vdots & \vdots & \vdots & \vdots \\

0 & 0 & 0 & 0 & 0 & \ldots & -w_1+w_2i & 0 & x_n & -x_ni\\

0 & 0 & 0 & 0 & 0 & \ldots & 0 & -w_1+w_2i & y_n & y_ni\\

0 & 0 & 0 & 0 & 0 & \ldots & 0 & 0 & 0 & 0\\

0 & 0 & 0 & 0 & 0 & \ldots & 0 & 0 & 0 & 0
\end{pmatrix}$
      \caption{Lie algebra $\f{g}$ of $G$}\label{tabla}
\end{sidewaystable}

The isotropy algebra is
generated by the matrix
$$
\begin{pmatrix}
0 & 0 & 0 & 0 & 0 & \ldots & 0 & 0 & 2 & 0\\
0 & 0 & 0 & 0 & 0 & \ldots & 0 & 0 & 0 & 0\\
\vdots  & \vdots & \vdots & \vdots & \vdots & \ddots & \vdots & \vdots & \vdots & \vdots \\
0 & 0 & 0 & 0 & 0 & \ldots & 0 & 0 & 0 & 0
\end{pmatrix},
$$
so exponentiating we obtain a Lie group $G$ and a closed subgroup
$H$ defining a homogeneous model for $M$. The metric $g$ and the
complex structure $J$ give a bilinear form and a complex structure
on
$$\f{m}=\mathrm{span}\{\partial_{w^1},\partial_{w^2},\partial_{z^1},\partial_{z^2},\partial_{x^a},\partial_{y^a}\}$$
respectively. These induce a $G$-invariant metric $\bar{g}$ and a
$G$-invariant complex structure $\bar{J}$ on $G/H$ so that
$(M,g,J)$ is locally holomorphically isometric to
$(G/H,\bar{g},\bar{J})$.

\begin{proposition}
The homogeneous model $(G/H,\bar{g},\bar{J})$ constructed above is not geodesically complete.
\end{proposition}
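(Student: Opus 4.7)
The plan is to transport to $(G/H,\bar g,\bar J)$ the geodesic incompleteness argument already carried out for the local model $(\bb{C}^{2+n},h)$ in the previous subsection. Since $(G/H,\bar g,\bar J)$ is by construction locally holomorphically isometric to $(M,g,J)$, Theorem \ref{prop princ} guarantees that around every point of $G/H$ there exists a coordinate chart in which $\bar g$ takes the form (\ref{metric2}), with $b$ satisfying the Poisson equation in (\ref{3 equations singular}).

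First I would fix $\bar p\in G/H$ together with such a local chart, chosen to identify $\bar p$ with the point $(1,0,\ldots,0)\in\bb{C}^{n+2}$, and consider the geodesic $\bar\gamma$ with $\bar\gamma(0)=\bar p$ and $\dot{\bar\gamma}(0)=-\partial_{w^1}|_{\bar p}$. As already noted in the proof of Theorem \ref{prop princ}, the Christoffel symbols $\Gamma^{w^k}_{\cdot\cdot}$ all vanish, so the geodesic equations reduce to $\ddot w^1=0=\ddot w^2$; hence $w^1(\bar\gamma(t))=1-t$ and $w^2(\bar\gamma(t))=0$ on the interval for which $\bar\gamma$ remains in the chart.

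Next I would parallel transport along $\bar\gamma$ an orthonormal frame $\{E_1(t),E_2(t),\ldots\}$ with $E_1(0)=\frac{1}{\sqrt{|b(\bar p)|}}\partial_{w^1}|_{\bar p}$ and $E_2(0)=\frac{1}{\sqrt{|b(\bar p)|}}\partial_{w^2}|_{\bar p}$, exactly as in the previous subsection. The very same calculation then gives
\[
R_{E_1(t)E_2(t)E_1(t)E_2(t)}=\frac{b_0}{2\,b(\bar p)^2\,(1-t)^4},
\]
which diverges as $t\to 1^-$ (the case of interest being $b_0\neq 0$, cf. Proposition \ref{prop hol}).

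Finally, I would argue by contradiction: if $\bar\gamma$ extended smoothly to $t=1$, the scalar function $t\mapsto R_{E_1(t)E_2(t)E_1(t)E_2(t)}$ would be continuous on $[0,1]$, both the curvature tensor and the parallel frame being smooth along the extended geodesic, and this would contradict the divergence above. Hence $\bar\gamma$ cannot be continued past $t=1$, so $(G/H,\bar g,\bar J)$ is not geodesically complete. The only delicate point is to ensure that $\bar\gamma$ stays in the chosen chart up to $t=1$; should it exit at some earlier $T<1$, one covers the compact segment $\bar\gamma([0,T])$ by finitely many charts of the form prescribed by Theorem \ref{prop princ} and chains the local expressions through overlap isometries, the scalar invariant being intrinsic and independent of the chart.
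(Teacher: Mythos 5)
Your route is genuinely different from the paper's: instead of working inside a coordinate chart and invoking a parallelly propagated curvature singularity, the paper exhibits a chain of totally geodesic submanifolds $K\subset\widetilde{G^{\sigma}}$ and $G^{\sigma}=(G/H)^{\sigma}\subset G/H$ cut out by two Lie algebra involutions, reduces to the two--dimensional Lie group $K$ with Lie algebra $\bb{R}\{\partial_{w^1},\partial_{z^1}\}$, and integrates the geodesic ODEs of its left-invariant metric explicitly, finding solutions of the form $x=\sqrt{|b(p)|}/(t-c)$ that blow up in finite time. That argument is purely algebraic/ODE-theoretic and never touches the curvature.

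This difference matters, because your proof has a genuine gap at $b_0=0$. The proposition (and the corollary immediately after it, which feeds the homogeneous model back into Ambrose--Singer) places no restriction on $b_0$: a flat manifold can perfectly well carry a strongly degenerate structure of linear type with $\xi\neq 0$, and its homogeneous model $G/H$ is still nontrivial (note that $\widetilde{R}_p=\bigl(\tfrac{b_0}{2}dw^1\wedge dw^2+2\omega_p\bigr)\otimes A$ does not vanish when $b_0=0$). In that case $R\equiv 0$ and the scalar $R_{E_1(t)E_2(t)E_1(t)E_2(t)}$ is identically zero, so your contradiction evaporates, whereas the paper's subgroup $K$ and its incomplete geodesics are unaffected by the value of $b_0$. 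There is also an expository soft spot in the chart-chaining step: what you actually need is that $\alpha=\theta-i(\theta\circ J)$ is globally defined and closed on $G/H$, so that $w=e^{-v}$ continues unambiguously along the curve and $\nabla dw=0$ forces $w\circ\bar\gamma(t)=1-t$ on the whole (compact, if completeness is assumed) segment $[0,1]$; covering $\bar\gamma([0,T])$ for $T<1$ by finitely many charts does not by itself propagate the formula up to $t=1$. Once this is made precise, however, you get a cleaner conclusion that also repairs the $b_0=0$ case: $w=e^{-v}$ never vanishes, yet $w(\bar\gamma(1))$ would have to equal $0$, so the geodesic cannot reach $t=1$ at all -- no curvature computation needed.
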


\begin{proof}
Let $\sigma$ be the Lie
algebra involution of $\f{g}$ given by
$$\begin{array}{rrcl}
\sigma: & \f{g} & \to & \f{g}\\
        & B & \mapsto & -B\\
        & \partial_{w^1} & \mapsto & \partial_{w^1}\\
& \partial_{w^2} & \mapsto & -\partial_{w^2}\\
         & \partial_{z^1} & \mapsto & \partial_{z^1}\\
         & \partial_{z^2} & \mapsto & -\partial_{z^2}\\
         & \partial_{x^a} & \mapsto & \partial_{x^a}\\
& \partial_{y^a} & \mapsto & -\partial_{y^a}.
\end{array}$$
One can check that the restriction of $\sigma$ to $\f{m}$ is an
isometry with respect to the bilinear form given by $\bar{g}$. The
subalgebra of fixed points is
$\f{g}^{\sigma}=\bb{R}\left\{\partial_{w^1},\partial_{z^1},\partial_{x^a}\right\}$.
Working with the universal cover  if necessary we can assume that
$G$ is simply-connected so that $\sigma$ induces an involution in
$G$ and therefore an isometric involution in $G/H$. We will denote
all this involutions by $\sigma$. Let $G^{\sigma}$ be the
connected Lie subgroup of $G$ with Lie algebra $\f{g}^{\sigma}$,
note that  $\f{g}^{\sigma}\cap\f{hol}^{\widetilde{\nabla}}=\{0\}$,
so $\left(G/H\right)^{\sigma}=G^{\sigma}$, where
$\left(G/H\right)^{\sigma}$ stands for the fixed point set of
$\sigma:G/H\to G/H$. It is a well-known result that
$\left(G/H\right)^{\sigma}=G^{\sigma}$ is a closed totally
geodesic submanifold of $G/H$. Let now $\theta$ be the Lie algebra
involution of $\f{g}^{\sigma}$ given by
$$\begin{array}{rrcl}
\theta: & \f{g}^{\sigma} & \to & \f{g}^{\sigma}\\
        & \partial_{w^1} & \mapsto & \partial_{w^1}\\
         & \partial_{z^1} & \mapsto & \partial_{z^1}\\
         & \partial_{x^a} & \mapsto & -\partial_{x^a},
\end{array}$$
which is again an isometry with respect to the bilinear form
induced in $\f{g}^{\sigma}$ by restriction from $\f{m}$. The
subalgebra of fixed points is
$\f{k}=\left(\f{g}^{\sigma}\right)^{\theta}=\bb{R}\left\{\partial_{w^1},\partial_{z^1}\right\}$.
Let $\widetilde{G^{\sigma}}$ be the universal cover of
$G^{\sigma}$, $\theta:\f{g}^{\sigma}  \to  \f{g}^{\sigma}$ induces
an isometric involution
$\theta:\widetilde{G^{\sigma}}\to\widetilde{G^{\sigma}}$.
Therefore, let $K$ be the connected Lie subgroup of
$\widetilde{G^{\sigma}}$ with lie algebra $\f{k}$, $K$ is a
totally geodesic submanifold of $\widetilde{G^{\sigma}}$.

Let $s$ be the sign of $b(p)$. We define the left-invariant vector
fields in $\f{k}$
$$U=\frac{1}{\sqrt{|b(p)|}}\partial_{w^1},\hspace{2em} V=U-s\sqrt{|b(p)|}\partial_{z^1}.$$
We have
$$
<U,U>=s,\hspace{2em}<V,V>=-s,\hspace{2em}<U,V>=0,\\
$$
$$[U,V]=\frac{1}{\sqrt{|b(p)|}}(u-v),$$
 where $<\hspace{1mm},\hspace{1mm}>$ stands for the bilinear form inherited by $\f{k}$ from $\f{g}^{\sigma}$
and which determines the pseudo-Riemannian
left-invariant metric in $K$. The Levi-Civita connection of this
metric is
$$\begin{array}{cc}
\nabla_UU=\frac{1}{\sqrt{|b(p)|}}V, &
\nabla_UV=\frac{1}{\sqrt{|b(p)|}}U,\\
\nabla_VV=\frac{1}{\sqrt{|b(p)|}}U, &
\nabla_VU=\frac{1}{\sqrt{|b(p)|}}V.
\end{array}$$
Let $\gamma$ be a curve in $K$ and $\dot{\gamma}$ its tangent
vector. Setting $\dot{\gamma}(t)=u(t)U+v(t)V$, the geodesic
equation $\nabla_{\dot{\gamma}}\dot{\gamma}=0$ implies
$$
\begin{array}{l}
\dot{u}+\frac{1}{\sqrt{|b(p)|}}(uv+v^2)=0\\
\dot{v}+\frac{1}{\sqrt{|b(p)|}}(uv+u^2)=0.
\end{array}
$$
Changing variables to $x=u+v$ and $y=u-v$ the equations transform
into
$$
\begin{array}{l}
\dot{x}+\frac{1}{\sqrt{|b(p)|}}x^2=0\\
\dot{y}-\frac{1}{\sqrt{|b(p)|}}xy=0,
\end{array}
$$
the solutions of which are
$$x=\sqrt{|b(p)|}\frac{1}{t-c},\hspace{2em}y=A\exp\left(\sqrt{|b(p)|}\frac{1}{t-c}\right)$$
for some constants $A,c\in\bb{R}$. Therefore, $K$ is not
geodesically complete. Hence, since we have the following inclusions of totally geodesic submanifolds
$$K\subset\widetilde{G^{\sigma}},\qquad G^{\sigma}=\left(G/H\right)^{\sigma}\subset G/H,$$
the manifold $(G/H,g,J)$ is not geodesically complete.
\end{proof}

\begin{corollary}
Let $(M,g,J)$ be a connected and simply-connected pseudo-K\"ahler manifold
admitting a strongly degenerate pseudo-K\"ahler structure of linear
type $S$, then it is geodesically uncomplete.
\end{corollary}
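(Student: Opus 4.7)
The plan is to proceed by contradiction: assume $(M,g,J)$ is geodesically complete and combine Theorem~2.4 with the homogeneous-model machinery of Section~\ref{section homogeneous model} together with the immediately preceding Proposition.

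First, Theorem~2.4 (Kiri\v{c}enko's pseudo-K\"ahler version of Ambrose--Singer) applies: since $M$ is connected, simply-connected, and (by hypothesis) geodesically complete, the existence of the homogeneous pseudo-K\"ahler structure $S$ upgrades $(M,g,J)$ to a reductive homogeneous pseudo-K\"ahler manifold with canonical connection $\widetilde{\nabla}=\nabla-S$. The construction of Section~\ref{section homogeneous model} then attaches to $S$ the Lie algebra $\f{g}$, a simply-connected Lie group $\widehat{G}$ integrating it, and a closed subgroup $\widehat{H}$ with Lie algebra $\f{hol}^{\widetilde{\nabla}}$, equipping $\widehat{G}/\widehat{H}$ with a $\widehat{G}$-invariant pseudo-K\"ahler structure $(\bar g,\bar J)$ that realizes $S$ at the origin.

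Under the standing hypotheses on $M$, the classical transport-of-structure argument for reductive homogeneous pseudo-Riemannian spaces (see \cite{TV}) promotes the a priori local holomorphic isometry between $M$ and $\widehat{G}/\widehat{H}$ to a global one. The preceding Proposition, however, constructs an inextendible geodesic inside the totally geodesic subgroup $K \subset \widetilde{G^\sigma}$, which in turn sits inside the homogeneous model as a totally geodesic submanifold, so this model is not geodesically complete. Since pseudo-Riemannian covering maps preserve geodesic completeness in both directions, the incompleteness transfers to $M$, delivering the desired contradiction.

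The delicate point will be justifying that the identification $M \simeq \widehat{G}/\widehat{H}$ is global, not merely local; this is exactly where completeness and simple-connectedness are jointly essential, through the standard monodromy-style lifting of the locally defined $\f{g}$-action to a genuinely transitive action on $M$. Once this identification is secured, the conclusion follows immediately from the explicit incomplete geodesic produced in the previous Proposition.
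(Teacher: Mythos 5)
Your proposal is correct and follows essentially the same route as the paper: assume completeness, invoke Kiri\v{c}enko's version of the Ambrose--Singer theorem to obtain a global holomorphic isometry with the homogeneous model $G/H$ of Section~4, and contradict the preceding Proposition's incompleteness of that model. The extra remarks about covering maps and monodromy are harmless but unnecessary once the global isometry is in hand, which is exactly what the paper asserts directly from the Ambrose--Singer theorem under the connected, simply-connected and complete hypotheses.
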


\begin{proof}
Suppose that $(M,g,J)$ is geodesically complete. Ambrose-Singer
theorem assures that $(M,g,J)$ is (globally) holomorphically
isometric to the homogeneous model $(G/H,\bar{g},\bar{J})$. But
this homogeneous model is not geodesically complete.
\end{proof}

\bigskip

\section{Homogeneous structures and homogeneous plane
waves}\label{secwave}

%

\subsection{The Lorentz case}
\label{LKc}

\begin{definition}
A plane wave is the Lorentz manifold $M=\R^{n+2}$ with metric
$$g=dudv+A_{ab}(u)x^ax^bdu^2+\sum_{a=1}^n(dx^a)^2,$$
where $(A_{ab})$ is a symmetric matrix called the profile.
\end{definition}

A plane wave is called \textit{homogeneous} if the Lie algebra of
Killing vector fields acts transitively in the tangent space at
every point. It is well known \cite{Blau} that any plane wave
admits the following algebra of Killing vector fields
$$\mathrm{span}\{\partial_v,X_{p_a},X_{q_a};a=1,\ldots,n\},$$ where $p_a,q_a$  are
solutions of the harmonic oscillator equation $d^2f/du^2=A(u)f$
with initial values
$$\begin{matrix}
(p_a)_b(u_0)=\delta_{ab},&(\dot{p}_a)_b(u_0)=0\\
(q_a)_b(u_0)=0,&(\dot{q}_a)_b(u_0)=\delta_{ab},
\end{matrix}$$
and
$$X_f=f_a\partial_{x^a}-(df/du)_a x^a\partial_v.$$ In
particular, this algebra is isomorphic to the Heisenberg algebra.
Therefore, the homogeneity of a plane wave is characterized by the
existence of an extra Killing vector field with non zero
$u$-component. Homogeneous plane waves were classified in
\cite{Blau}. Among them there are two special types in which we
are interested: Cahen-Wallach spaces and singular scale-invariant
homogeneous plane waves.

The Cahen-Wallach space $M^{1,n+1}_{\lambda_1,\ldots,\lambda_n}$
is defined as a plane wave with metric
$$g=dudv+A_{ab}x^ax^bdu^2+\sum_{a=1}^n(dx^a)^2,$$ where $(A_{ab})$ is
a constant symmetric matrix with eigenvalues
$(\lambda_1,\ldots,\lambda_n)$. They are one of the possible
simply connected Lorentzian symmetric spaces together with
$(\R,-dt^2)$, the de Sitter, and the anti de Sitter spaces (see
\cite{CW}). Note that the curvature information of a plane wave is
contained in the profile $A$, this meaning that the only
non-vanishing component is
$$R_{uaub}=-A_{ab}(u).$$
The condition of being symmetric is then
$$\nabla R=0 \hspace{2mm}\Leftrightarrow\hspace{2mm} \partial_u A_{ab}=0,$$
which is obviously satisfied. As a symmetric space a Cahen-Wallach
space admits the homogeneous pseudo-Riemannian structure $S=0$.
The extra Killing vector field is $X=\partial_u$.

%
%
%
%
%

\bigskip

A singular scale-invariant homogeneous plane wave is a plane wave
with metric
$$g=dudv+\frac{B_{ab}}{u^2}x^ax^bdu^2+\sum_{a=1}^{n}(dx^a)^2,$$
where $(B_{ab})$ is a constant symmetric matrix. Unlike
Cahen-Wallach spaces these kind of plane waves are not
geodesically complete. Moreover, as their name suggests, these
spaces are homogeneous with extra Killing vector field
$X=u\partial_u-v\partial_v$, but not symmetric since the profile
$A(u)=B/u^2$ is not $u$-independent. They enjoy many properties.
For example, they have been found to occur universally as Pentose
Limits of space-time singularities \cite{BBOP}. Furthermore in
\cite{Mon} the following characterization is given.

\begin{theorem}
Let $(M,g)$ be a connected pseudo-Riemannian manifold of dimension
$n+2$ admitting a degenerate homogeneous pseudo-Riemannian
structure of liner type, i.e., $S_XY=g(X,Y)\xi-g(\xi,Y)X$ with
$g(\xi,\xi)=0$. Then $(M,g)$ is locally isometric to $\R^{n+2}$
with metric
$$ds^2=dudv+\frac{B_{ab}}{u^2}x^ax^bdu^2+\sum_{a=1}^{n}\varepsilon_a(dx^a)^2$$
for some symmetric matrix $(B_{ab})$ and $\varepsilon_a=\pm 1 ,
a=1,\ldots,n$.
\end{theorem}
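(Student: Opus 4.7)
My strategy would be to adapt the pseudo-K\"ahler proof of Theorem \ref{prop princ} to the simpler real setting, where only a single distinguished $1$-form $\theta$ appears in place of the pair $(\theta,\theta\circ J)$, and the one complex variable $w$ is replaced by a single real coordinate $u$.

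First, from $\wnabla g=0$, $\wnabla S=0$, $\wnabla R=0$ with $\wnabla=\nabla-S$ and $S_XY=g(X,Y)\xi-g(\xi,Y)X$, I would derive the real analogue of Lemma \ref{Lemma BGO}: setting $\theta=g(\cdot,\xi)$, one obtains $\nabla\theta=\theta\otimes\theta$, $\theta\wedge R(\cdot,\cdot,W,U)=0$, and $\nabla R=k\,\theta\otimes R$ for an explicit constant $k>0$. These are purely algebraic consequences of substituting the expression of $S$ into $\wnabla\xi=0$ and $\wnabla R=0$ and using $g(\xi,\xi)=0$. Antisymmetrising $\nabla\theta=\theta\otimes\theta$ gives $d\theta=0$, so locally $\theta=dv$; setting $u=e^{-v}$ and repeating the calculation of (\ref{nabla dw}) gives $\nabla du=0$. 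The vector field $U$ dual to $du$ is therefore null (because $\xi$ is isotropic), parallel and Killing.

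Next, a Walker-type argument provides coordinates $(u,z,x^1,\dots,x^n)$ with $U=\partial_z$ and $z$-independent metric
$$g=du\,dz+F(u,x)\,du^2+2A_a(u,x)\,du\,dx^a+g_{ab}(u,x)\,dx^a dx^b.$$
The constraint $du\wedge R(\cdot,\cdot,W,U)=0$ forces every component of $R$ not carrying a $du$ in each of the first two slots to vanish, so $R_{abcd}=R_{uabc}=0$. Parallel-transporting an orthonormal basis of the transverse screen space along a section --- exactly as in the step built around the endomorphism (\ref{holA}) in Theorem \ref{prop princ} --- produces globally parallel transverse vector fields. Their integral coordinates simultaneously make $g_{ab}=\varepsilon_a\delta_{ab}$ constant and eliminate the cross terms $A_a\,du\,dx^a$ via a redefinition $z\mapsto z+\psi(u,x)$, the relevant integrability obstruction being precisely the vanishing of $R_{uabc}$ and $R_{abcd}$.

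Finally, with the metric reduced to $g=du\,dz+F(u,x)\,du^2+\sum_a\varepsilon_a(dx^a)^2$, the only non-trivial curvature component is $R_{uaub}=-\tfrac{1}{2}\partial_a\partial_b F$, and the recurrence $\nabla R=k\,\theta\otimes R=-(k/u)\,du\otimes R$ specialises to
$$\partial_u\bigl(\partial_a\partial_b F\bigr)=-\tfrac{k}{u}\,\partial_a\partial_b F,$$
which with $k=2$ integrates to $\partial_a\partial_b F=B_{ab}/u^2$ for constants $B_{ab}$. Double integration in the $x^a$ variables gives $F=B_{ab}x^ax^b/u^2$ plus terms linear in $x^a$ or depending only on $u$; these residual pieces are absorbed into $z$ by one last translation $z\mapsto z+\varphi(u,x)$ with $\partial_a\varphi$ linear in $x$, yielding the announced normal form.

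The main obstacle is the coordinate clean-up of the intermediate metric --- simultaneously flattening the transverse part to $\varepsilon_a\delta_{ab}$, removing the $A_a\,du\,dx^a$ cross terms and trimming $F$ down to its pure quadratic-in-$x$ piece, all while preserving $du\,dz$ and the parallelism of $du$. This is the step where the constraints $\theta\wedge R=0$ and $\nabla R=k\theta\otimes R$ must be fully exploited; once they are, each reduction is forced and the remaining ODE in $u$ integrates trivially to reproduce the $1/u^2$ profile characteristic of a singular scale-invariant homogeneous plane wave.
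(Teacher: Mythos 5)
First, a point of reference: the paper does not actually prove this theorem itself --- it is quoted from \cite{Mon} --- so your proposal can only be measured against the analogous pseudo-K\"ahler argument of Theorem \ref{prop princ}, which is indeed the template you chose. Most of your skeleton is sound: the real analogues of Lemma \ref{Lemma BGO} do hold, with $\nabla\theta=\theta\otimes\theta$, $\theta\wedge R(\cdot,\cdot,W,U)=0$ and $\nabla R=2\theta\otimes R$; the function $u=e^{-v}$ with $\nabla du=0$ and the parallel null field $U=-u\,\xi$ are correct; the wedge identity does reduce the curvature to the components $R_{uaub}$; and the final integration of $\partial_u(\partial_a\partial_bF)=-\tfrac{2}{u}\,\partial_a\partial_bF$, together with $\partial_c\partial_a\partial_bF=0$ coming from the transverse part of the recurrence, does produce the $B_{ab}/u^2$ profile after absorbing the affine-in-$x$ remainder into $z$.

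The genuine gap is the step where you claim to produce ``globally parallel transverse vector fields'' exactly as in the construction around (\ref{holA}). That transfer from the K\"ahler case fails. In the K\"ahler setting the curvature collapses to the single component $R_{w^1w^2w^1w^2}$ because there are \emph{two} independent $1$-forms $dw^1,dw^2$ satisfying the wedge identity; the image of every curvature endomorphism then lies in $E=\mathrm{span}\{\partial_{w^1},\partial_{w^2},\partial_{z^1},\partial_{z^2}\}$, the holonomy acts trivially on $E^{\bot}$, and that triviality is precisely what makes the parallel transverse frame exist. In the real case there is only one $1$-form $du$, the surviving curvature components form a whole symmetric matrix $R_{uaub}$, and the holonomy acts nontrivially on the screen directions: if all $\partial_{x^a}$ were parallel one would have $R_{XY}\partial_{x^a}=0$ for all $X,Y$, hence $R_{uaub}=0$ and the space would be flat --- so for $B\neq 0$ no such frame exists (the coordinate fields $\partial_{x^a}$ of the target normal form are themselves not parallel, since $\Gamma^{v}_{ua}\neq 0$). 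What is true, and what you must use instead, is that the connection induced on the quotient screen bundle $\xi^{\bot}/\langle\xi\rangle$ is flat (this is where $R_{abcd}=R_{uabc}=0$ enters), so one can choose a frame parallel \emph{modulo the null direction}; lifting it gives coordinates with $\nabla\partial_{x^a}\in\langle\partial_z\rangle$, which is enough to make $g_{ab}=\varepsilon_a\delta_{ab}$ and to remove the cross terms by $z\mapsto z+\psi(u,x)$, but does not trivialize the holonomy on the screen. With that substitution --- the standard Brinkmann-to-pp-wave reduction --- the rest of your argument goes through.
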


Note that for Lorentzian signature this means that a manifold
admitting a degenerate homogeneous structure of linear type is
locally a singular scale-invariant homogeneous plane wave.
Conversely it is easy to see that every singular scale-invariant
homogeneous plane wave admits such a homogeneous structure with
$\xi=-\frac{1}{u}\partial_v$.

\subsection{The Lorentz-K\"ahler case}

By a Lorentz-K\"ahler manifold we understand a pseudo-K\"ahler
manifold of index $2$. In this subsection we will exhibit the
relation and similarities between Cahen-Wallach spaces and
singular scale-invariant homogeneous plane waves on one side and
some kind of (locally) homogeneous Lorentz-K\"ahler manifolds on
the other. Although, as far as the authors know, there is no
formal definition of a ``complex'' plane wave, this relation could
allow us to understand the latter spaces as a complex
generalization of the former, at least in the important
Lorentz-K\"ahler case, suggesting a starting point for a possible
definition of \textit{complex plane waves}.

\bigskip

Cahen-Wallach spaces are the model of symmetric Lorentzian plane
waves. Furthermore, as a wave, it is the twisted product of a
plane wave front and a two dimensional manifold containing time
and the direction of propagation. This two dimensional space gives
the real geometric information of the total manifold and in
particular it contains a null parallel vector field. For this
reason, in the Lorentz-K\"ahler case, we study symmetric manifolds
of complex dimension two (real dimension four) with a null
parallel one complex dimensional distribution. In the
classification of simply-connected indecomposable and not
irreducible pseudo-K\"ahlerian symmetric spaces of signature
$(2,2)$ given in \cite{Gal} (see also \cite{KO}), there is just
one possibility corresponding to such a situation. This is a
manifold with holonomy algebra
$$\f{hol}^{\gamma_1=0, \gamma_2=0}_{n=0} = \R p_1\wedge p_2=\R
\left(\begin{array}{cccr}
0 & 0 & 0 & -1\\
0 & 0 & 1 & 0\\
0 & 0 & 0 & 0\\
0 & 0 & 0 & 0
\end{array}\right)$$
and curvature $R_{\lambda_5=1}$ or $-R_{\lambda_5=1}$ in the
notation of \cite{Gal} which we explain now. Here the tangent
space has been identified with $\R^{2,2}=\C^{1,1}$ with a basis
$\{p_1,p_2,q_1,q_2\}$ with respect to which the metric and the
complex structure take form
$$\left(\begin{array}{cccr}
0 & 0 & 1 & 0\\
0 & 0 & 0 & 1\\
1 & 0 & 0 & 0\\
0 & 1 & 0 & 0
\end{array}\right)\hspace{1em}\text{and} \hspace{1em}
\left(\begin{array}{cccr}
0 & -1 & 0 & 0\\
1 & 0 & 0 & 0\\
0 & 0 & 0 & -1\\
0 & 0 & 1 & 0
\end{array}\right)
$$
respectively. The curvature $R_{\lambda_5=1}$ stands for the
curvature operator $R\in S^2(\R^{2,2}\wedge\R^{2,2})$ that sends
all elements of the basis to zero except for $R(q_1\wedge
q_2)=p_1\wedge p_2$.

Now, in order to get the K\"ahler-Lorentz analog of Cahen-Wallach
spaces, we add a $n$ complex dimensional plane wave front to this
four dimensional manifold. This is done by considering the
holonomy algebra $\f{hol}^{\gamma_1=0,
\gamma_2=0}_{n=0}\oplus\{\mathrm{Id_{2n}}\}$ acting in
$\R^{2n+4}=\R^{2,2}\oplus \R^{2n}$ through the action of
$\f{hol}^{\gamma_1=0, \gamma_2=0}_{n=0}$ in $\R^{2,2}$ and the
trivial action in $\R^{2n}$.

\begin{proposition}
Let $(M,g,J)$ be a (locally) symmetric Lorentz-K\"ahler manifold
of dimension $2n+4$, $n\geq 0$, with holonomy
$\f{hol}^{\gamma_1=0, \gamma_2=0}_{n=0}\oplus\{\mathrm{Id_{2n}}\}$
acting on $T_xM\simeq \R^{2n+4}$, $x\in M$, as explained above.
The metric $g$ is locally of the form
\begin{multline}\label{metric1}
g  = dw^1dz^1+dw^2dz^2+b(dw^1dw^1+dw^2dw^2)
+\sum_{a=1}^nr_a(dx^adw^1+dy^adw^2)\\+\sum_{a=1}^ns_a(dx^adw^2-dy^adw^1)+\sum_{a=1}^n(dx^adx^a+dy^ady^a),
\end{multline}
where the functions $b,r_a,s_a$, $a=1,\ldots,n$, only depend on
$w^1$ and $w^2$ and satisfy
$$\dfrac{\partial s_a}{\partial
w^1}=\dfrac{\partial r_a}{\partial w^2},\qquad \dfrac{\partial
s_a}{\partial w^2}=-\dfrac{\partial r_a}{\partial
w^1}\vspace{2mm},\qquad \Delta b=b_0,\hspace{1em}
b_0\in\bb{R}-\{0\}.$$
\end{proposition}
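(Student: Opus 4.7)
The plan is to transfer the coordinate and curvature analysis of Theorem~\ref{prop princ} to this setting, substituting the hypothesis ``admits a strongly degenerate homogeneous structure of linear type'' by ``is symmetric with the prescribed holonomy''. Concretely, I still need a pair of $J$-related parallel null vector fields $Z,JZ$ (equivalently a holomorphic function $w$ with $\nabla dw=0$) and a $2n$-dimensional $J$-invariant parallel complement on which the holonomy acts trivially; both will be manufactured directly from the prescribed holonomy representation rather than from the $1$-form $\theta$ used in Section~3.

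First I would exploit the explicit shape of the holonomy algebra. The matrix $p_1\wedge p_2$ has its last two columns zero, so $\mathrm{span}\{p_1,p_2\}$ is pointwise annihilated by $\f{hol}^{\gamma_1=0,\gamma_2=0}_{n=0}$; since $Jp_1=p_2$ (read off from the given matrix of $J$) and both vectors are null for the given metric, this is a $J$-invariant, totally isotropic $2$-plane that is pointwise fixed by the holonomy. Parallel transport then produces local vector fields $Z,JZ$ with $\nabla Z=\nabla JZ=0$. The $1$-forms $\eta_1=g(\cdot,Z)$ and $\eta_2=g(\cdot,JZ)$ are parallel, hence closed, so locally $\eta_1=dw^1$ and $\eta_2=dw^2$. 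Using $g(X,JY)=-g(JX,Y)$ one finds $\eta_2=-\eta_1\circ J$, which shows that $dw^1+idw^2$ is of type $(1,0)$; equivalently $w=w^1+iw^2$ is holomorphic and $\nabla dw=0$. This is precisely the point reached by equation~(\ref{nabla dw}) in the proof of Theorem~\ref{prop princ}.

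From here the coordinate construction of Theorem~\ref{prop princ} applies essentially verbatim: I would foliate a neighborhood by the level sets $\mathcal{H}_\lambda=w^{-1}(\lambda)$, introduce coordinates $z=z^1+iz^2$ along the leaves with $Z=\partial_{z^1}$, $JZ=\partial_{z^2}$, and then use the trivial action of the holonomy on the $J$-invariant $2n$-dimensional complement of $E=\mathrm{span}\{\partial_{w^1},\partial_{w^2},\partial_{z^1},\partial_{z^2}\}$ to parallel-transport an orthonormal basis $\{e_a,Je_a\}$ and take coordinates $x^a,y^a$ with $\partial_{x^a}=e_a$, $\partial_{y^a}=Je_a$. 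The metric then takes the form (\ref{metric1}), and imposing $\nabla\partial_{x^a}=\nabla\partial_{y^a}=0$ yields exactly the Cauchy--Riemann equations relating $r_a$ and $s_a$.

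The only substantive new point is the integration of $\Delta b$. The curvature computation of Theorem~\ref{prop princ} still yields $R=\tfrac{1}{2}\Delta b\,(dw^1\wedge dw^2)\otimes(dw^1\wedge dw^2)$, with $\Delta$ the Laplacian in $(w^1,w^2)$. Since all Christoffel symbols of the form $\Gamma^{w^k}_{\cdot\cdot}$ vanish, the symmetric condition $\nabla R=0$ collapses to $\partial_{w^1}(\Delta b)=\partial_{w^2}(\Delta b)=0$, so $\Delta b\equiv b_0$ for some constant $b_0\in\R$; the assumption that the holonomy is genuinely the prescribed one-dimensional algebra rules out $b_0=0$, since that would force $R=0$ and hence trivial holonomy. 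I expect the only mildly delicate step to be verifying that $\eta_1+i\eta_2$ is of type $(1,0)$ so that the holomorphic coordinate $w$ emerges cleanly; everything downstream is a direct re-run of the calculations already performed in Section~3.
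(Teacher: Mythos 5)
Your proposal is correct and follows essentially the same route as the paper: parallel vector fields $Z,JZ,E_a,JE_a$ obtained from the holonomy-fixed vectors $p_1,p_2$ and the trivially-acted-on complement, the holomorphic function $w$ with $dw=g(\cdot,Z)-i\,g(\cdot,Z)\circ J$, the adapted coordinates along the foliation $w^{-1}(\lambda)$, the Cauchy--Riemann relations from $\nabla\partial_{x^a}=\nabla\partial_{y^a}=0$, and finally $\nabla R=0\Leftrightarrow\Delta b=b_0$ with $b_0\neq0$ forced by nontrivial holonomy. (Only a trivial slip: it is the \emph{first} two columns of the matrix $p_1\wedge p_2$ that vanish, which is what shows $p_1,p_2$ are annihilated.)
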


\begin{proof}
First note that since $\nabla R=0$ the holonomy algebra at a point
$x$ is generated by the elements $R_{XY}$, $X,Y\in T_xM$. Let
$\{e_1,\ldots,e_n,Je_1,\ldots,$ $Je_n\}$ be an orthonormal basis
of $\R^{2n}$, since $p_1,p_2,e_i,Je_i$, $i=1,\ldots,n$, are
invariant by holonomy, we can extend them by parallel transport
defining parallel vector fields $Z,JZ,E_i,JE_i$. Let now
$\alpha^1=g(\cdot,Z)$ and $\alpha^2=-\alpha^1\circ J$, consider
the complex form $\alpha=\alpha^1+i\alpha^2$. Since $\nabla
Z=0=\nabla JZ$, we have $\nabla \alpha=0$, hence in particular
$\alpha$ is holomorphic and closed. This means that locally there
is a holomorphic function $w:U\to \C$ such that $dw=\alpha$. Since
$dw$ is non-zero at some point and it is parallel, we have that
$dw$ is never zero. Hence if the set $w^{-1}(\lambda)$,
$\lambda\in\C$, is non-empty it defines a complex hypersurface in
$U$. Note that since $g(Z,Z)=0=g(Z,E_i)=g(Z,JE_i)$ the vector
fields $Z,JZ,E_i,JE_i$ are always tangent to the hypersurfaces
$w^{-1}(\lambda)$. Therefore we can take coordinates
$\{w^1,w^2,z^1,z^2,x^i,y^i\}$ such that $w=w^1+iw^2$,
$\partial_{z^1}=Z$, $\partial{z^2}=JZ$, $\partial_{x^i}=E_i$ and
$\partial_{y^i}=JE_i$. With respect to this coordinates the metric
is
\begin{multline*}
g  = dw^1dz^1+dw^2dz^2+b(dw^1dw^1+dw^2dw^2)
+\sum_{a=1}^nr_a(dx^adw^1+dy^adw^2)\\+\sum_{a=1}^ns_a(dx^adw^2-dy^adw^1)+\sum_{a=1}^n(dx^adx^a+dy^ady^a),
\end{multline*}
where the functions $b,r_a,s_a$ only depend on $w^1$ and $w^2$.
Now imposing $\nabla\partial_{x^a}=0=\nabla\partial_{y^a}$ we
obtain
\begin{eqnarray}\label{Cauchy-Riemann2}
\frac{\partial s_a}{\partial w^1}&=&\frac{\partial r_a}{\partial
w^2}\nonumber\\
\frac{\partial s_a}{\partial w^2}&=&-\frac{\partial r_a}{\partial
w^1}.
\end{eqnarray}
In addition, the only non-zero element of the curvature tensor is
$$R_{\partial{w^1}\partial{w^2}\partial{w^1}\partial{w^2}}=\frac{1}{2}\Delta
b,$$
where $\Delta$ stands for the Laplace operator with respect
to the variables $(w^1,w^2)$. The condition of being (locally)
symmetric is then
$$\nabla R=0 \hspace{2mm}\Leftrightarrow\hspace{2mm} \Delta b=b_0,$$
for $b_0\in\R-\{0\}$.
\end{proof}

\bigskip

In view of this Proposition we consider the pseudo-K\"ahler
manifold $(\C^{2+n},g)$, with $g$ given by \eqref{metric1}, as a
natural Lorentz-K\"ahler equivalent to Cahen-Wallach spaces. Note
that the Laplacian condition admits solutions with singularities.
As Cahen-Wallach spaces are simply-connected, we only consider
solutions $b$ defined on the whole $\C^{2+n}$. For these $b$, it
is easy to check that $(\C^{2+n},g)$ is geodesically complete.

\bigskip

We now study Lorentzian singular scale-invariant homogeneous plane
waves. As these manifolds are characterized by degenerate
homogeneous structure tensors of linear type (see \S \ref{LKc}),
from Theorem \ref{prop princ} above the natural equivalent to this
spaces are Lorentz-K\"ahler manifolds with strongly degenerate
homogeneous pseudo-K\"ahler structure tensors of linear type, and
more precisely, the space $(\C^{n+2}-\{w^1=w^2=0\},g)$ with $g$
given by \eqref{metric2}. Moreover, the local expression of the
metric \eqref{metric2} given in Theorem \ref{prop princ}
(restricted to signature $(2,2+2n)$) and the metric
\eqref{metric1} are the same except for the function $b$, which
has a different Laplacian in each case. As a straight forward
computation shows, the curvature tensor of both metrics
\eqref{metric1} and \eqref{metric2} is
$$R=\frac{1}{2}\Delta b(dw^1\wedge dw^2)\otimes(dw^1\wedge
dw^2),$$ so all the curvature information is contained in the
Laplacian of the function $b$. For this reason, analogously to
Lorentz plane waves, we call $\Delta b$ the profile of the metric.
It is worth noting that in the Lorentz case one goes from
Cahen-Wallach spaces to singular scale-invariant homogeneous plane
waves by making the profile be singular with a term $1/u^2$. Doing
so, the space is no longer geodesically complete and a
cosmological singularity at $\{u=0\}$ is created. In the same way,
in the Lorentz-K\"ahler case one goes from metric \eqref{metric1}
to \eqref{metric2} by making the profile be singular with a term
$1/\left((w^1)^2+(w^2)^2\right)^2$ and again one transforms a
geodesically complete space to a geodesically uncomplete space,
and a cosmological singularity at $\{w^1=w^2=0\}$ is created. This
exhibits a close relation between this two couples of spaces.

\bigskip

\begin{center}
\begin{tabular}{|c|c|c|}
\hline
\multirow{2}{*}{} & \T Symmetric  & \B Strongly Deg. homog. \\
 & space & of linear type\\[0.5ex]
\hline

\multirow{5}{*}{Lorentz} &  \T Cahen-Wallach & \T Singular s.-i.
homog.\\
       & spaces & plane wave \\
       & & \\
                & \footnotesize{Profile: $A(u)=A(const.)$} & \footnotesize{Profile: $A(u)=A/u^2$}\\

                & \footnotesize{Geodesically complete} & \footnotesize{Geodesically
                uncomplete}\\
                                [0.5ex]
\hline
\multirow{5}{*}{Lorentz-K\"ahler} & \T $\C^{2+n}$ with metric  & \T $\C^{2+n}-\{w=0\}$\\ & \eqref{metric1} & with metric \eqref{metric2}\\
 & & \\
  & \footnotesize{Profile: $\Delta b=b_0(const.)$} & \footnotesize{Profile: $\Delta b=b_0/||w||^4$}\\
 & \footnotesize{Geodesically complete} & \footnotesize{Geodesically
 uncomplete}\\
  [0.5ex]
\hline
\end{tabular}
\end{center}

\bigskip

Finally, if we now write the expression of the metrics
\eqref{metric1} and \eqref{metric2} in complex notation (see
Corollary \ref{metrica comp 1} above) we get
\begin{eqnarray*}
h & = & \frac{1}{2}\left( dw\otimes d\overline{z}+dz\otimes
d\overline{w}\right)+\frac{b}{2}dw\otimes
d\overline{w}+\sum_{a=1}^n\frac{h_a}{2}dw\otimes
d\overline{z^a}\nonumber\\
& & +\sum_{a=1}^n\frac{\overline{h_a}}{2}dz^a\otimes
d\overline{w}+\frac{1}{2}\sum_{a=1}^n dz^a\otimes d\overline{z^a}.
\end{eqnarray*}
This expressions obey the general formula of a \textit{pp-wave}
(plane wave front with parallel rays) (see e.g. \cite{Bic},
\cite{EK}) but written with complex coordinates instead of real
coordinates. This kind of Lorentz manifolds include plane waves
and are related to (gravitational) radiation propagating at the
speed of light. Plane waves are exact solutions of Einstein's
field equations and metrics \eqref{metric1} and \eqref{metric1}
are Ricci-flat, so they solve vacuum Einstein's field equations.
Finally, it is worth noting that metrics \eqref{metric1} and
\eqref{metric2} are VSI (see Proposition \ref{proposition VSI}) a
common property of all plane waves.

\section{The pseudo-hyper-K\"ahler and pseudo-quaternion K\"ahler case}

During this section $\mathrm{dim}(M)=4n\geq 8$ is assumed. We
shall study strongly degenerate homogeneous structures of linear
type in the pseudo-hyper-K\"ahler and the pseudo-quaternion
K\"ahler cases.

\begin{definition}
Let $(M,g)$ be a pseudo-Riemannian manifold. A pseudo-quaternionic
Hermitian structure is a $3$-rank subbundle $\upsilon^3\subset
\f{so}(TM)$ with a local basis $J_1,J_2,J_3$ satisfying
$$J_1^2=J_2^2=J_3^2=-1,\qquad J_1J_2=J_3.$$
\end{definition}

This means that at every point $p\in M$ there is a subalgebra
$\upsilon^3_p\subset\f{so}(T_pM)$ isomorphic to the imaginary
quaternions, and in particular $g$ has signature $(4p,4q)$.

\begin{definition}
A pseudo-Riemannian manifold $(M,g)$ is called pseudo-quaternion
K\"ahler if it admits a parallel pseudo-quaternionic Hermitian
structure with respect to the Levi-Civita connection, or
equivalently if the holonomy group of the Levi-Civita connection
is contained in $Sp(p,q)Sp(1)$.
\end{definition}


Let $J_1,J_2,J_3$ be a local basis of $\upsilon^3$, and
$\omega_a=g(\cdot,J_a\cdot)$, $a=1,2,3$. The $4$-form
$$\Omega=\omega_1\wedge\omega_1+\omega_2\wedge\omega_2+\omega_3\wedge\omega_3$$
is independent of the choice of basis and hence it is globally
defined. A  pseudo-quaternionic Hermitian manifold
$(M,g,\upsilon^3)$ is pseudo-quaternion K\"ahler if and only if
$\Omega$ is parallel with respect to the Levi-Civita connection
(cf. \cite{AC}).

\begin{definition} An pseudo-quaternion K\"ahler
manifold $(M,g,\upsilon^3)$ is called a homogeneous
pseudo-quaternion K\"ahler manifold if there is a connected Lie
group $G$ of isometries acting transitively on $M$ and preserving
$\upsilon^3$. $(M,g,\upsilon^3)$ is called a reductive homogeneous
pseudo-quaternion K\"ahler manifold if the Lie algebra $\f{g}$ of
$G$ can be decomposed as $\f{g}=\f{h}\oplus\f{m}$ with
$$[\f{h},\f{h}]\subset \f{h},\qquad[\f{h},\f{m}]\subset \f{m}.$$
\end{definition}

As a corollary of Kiri\v{c}enko's Theorem \cite{Kir} we have

\begin{theorem}
A connected, simply connected and (geodesically) complete
pseudo-quaternion K\"ahler manifold $(M,g,\upsilon^3)$ is
reductive homogeneous if and only if it admits a linear connection
$\wnabla$ satisfying \begin{equation}\label{AS qK}\wnabla
g=0,\qquad \wnabla R=0,\qquad \wnabla S=0,\qquad
\wnabla\Omega=0,\end{equation} where $S=\nabla-\wnabla$, $\nabla$
is the Levi-Civita connection, $R$ is the curvature tensor of
$\nabla$, and $\Omega$ is the canonical $4$-form associated to
$\upsilon^3$.
\end{theorem}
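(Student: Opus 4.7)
The plan is to derive this statement as a direct corollary of Kiri\v{c}enko's theorem, exactly in the spirit of the pseudo-K\"ahler version recalled in equations \eqref{AS+J} of Section 2.

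For the forward implication, suppose $(M,g,\upsilon^3)$ is reductive homogeneous with decomposition $\f{g}=\f{m}\oplus\f{h}$. I would take $\wnabla$ to be the canonical (Nomizu) connection associated to this decomposition. By general theory this connection is $G$-invariant and any $G$-invariant tensor field is $\wnabla$-parallel. Since the transitive $G$-action preserves $g$ and $\upsilon^3$, it also preserves the globally defined $4$-form $\Omega$; the Levi-Civita connection $\nabla$ of a $G$-invariant metric is itself $G$-invariant, so both $S=\nabla-\wnabla$ and the curvature $R$ of $\nabla$ are $G$-invariant. Hence all four equations in \eqref{AS qK} hold.

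For the converse, I would first apply the pseudo-Riemannian Ambrose-Singer theorem of Section 2 to the triple $\wnabla g=0$, $\wnabla R=0$, $\wnabla S=0$. This produces a connected Lie group $G$ of isometries of $(M,g)$ acting transitively, together with a reductive decomposition of $\mathrm{Lie}(G)$ realizing $\wnabla$ as the canonical connection. In the Ambrose-Singer construction the elements of $G$ arise as compositions of transvections, whose action on tensor fields is given by $\wnabla$-parallel transport; consequently every $\wnabla$-parallel tensor is automatically $G$-invariant. The hypothesis $\wnabla\Omega=0$ then forces $\Omega$ to be $G$-invariant.

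The step I expect to be the main obstacle is upgrading $G$-invariance of $\Omega$ to $G$-invariance of the subbundle $\upsilon^3$. For $\dim M=4n\geq 8$, the pair $(g,\Omega)$ recovers the quaternionic structure pointwise: the stabilizer of $(g_p,\Omega_p)$ inside $GL(T_pM)$ is precisely $Sp(p,q)Sp(1)$, and the $\f{sp}(1)$-summand of its Lie algebra coincides with $\upsilon^3_p\subset\f{so}(T_pM)$. Hence any isometry preserving $g$ and $\Omega$ also preserves $\upsilon^3$, so $G$ acts on $(M,g,\upsilon^3)$ by automorphisms and reductive homogeneity in the pseudo-quaternion K\"ahler sense is established.
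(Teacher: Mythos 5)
Your argument is correct and is essentially the paper's own route: the paper gives no proof, simply invoking Kiri\v{c}enko's theorem on homogeneous spaces with an invariant structure tensor (applied here to the parallel $4$-form $\Omega$), and your proposal is a sound unpacking of exactly that, including the standard fact that for $\dim M=4n\geq 8$ the stabilizer of $(g_p,\Omega_p)$ is $Sp(p,q)Sp(1)$, so invariance of $\Omega$ recovers invariance of $\upsilon^3$ (this equivalence is the one the paper cites from \cite{AC}).
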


A tensor field $S$ satisfying the previous equations is called a
\emph{homogeneous pseudo-quaternion K\"ahler structure}. The
classification of such structures was obtained in \cite{BGO},
resulting five primitive classes $\QK_1,\QK_2$,
$\QK_3,\QK_4,\QK_5$. Among them $\QK_1,\QK_2,\QK_3$ have dimension
growing linearly with respect to the dimension of $M$. Hence


\begin{definition}
A homogeneous pseudo-quaternion K\"ahler structure $S$ is called
of linear type if it belongs to the class $\QK_1+\QK_2+\QK_3$.
\end{definition}

The local expression of $S\in\QK_1+\QK_2+\QK_3$ is
\begin{multline}\label{struct linear type
QT}S_XY=g(X,Y)\xi-g(Y,\xi)X+\sum_{a=1}^3\left(g(J_aY,\xi)J_aX-g(X,J_aY)J_a\xi\right)\\
+\sum_{a=1}^3g(X,\zeta^a)J_aY,
\end{multline}
where $\xi$ and $\zeta^a$, $a=1,2,3$, are vector fields. We then
give the following further definition.

\begin{definition}
A homogeneous pseudo-quaternion K\"ahler structure of linear type
given by formula (\ref{struct linear type QT}) is called strongly
degenerate if $\xi\neq 0$, $g(\xi,\xi)=0$ and $\zeta^a=0$ for
$a=1,2,3$.
\end{definition}

\begin{proposition}\label{prop qK}
Let $(M,g,\upsilon^3)$ be a pseudo-quaternion K\"ahler manifold
admitting a strongly degenerate homogeneous pseudo-quaternion
K\"ahler structure of linear type. Then $(M,g,\upsilon^3)$ is
flat.
\end{proposition}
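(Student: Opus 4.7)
The plan is to extend the strategy of Lemma \ref{Lemma BGO} and Theorem \ref{prop princ} to the pseudo-quaternion K\"ahler setting, exploiting that the triple $\upsilon^3$ produces \emph{four} linearly independent null directions $\xi, J_1\xi, J_2\xi, J_3\xi$ (instead of the two $\xi, J\xi$ of the K\"ahler case), enough to over-determine $R$ and force it to vanish.

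First I would compute $\nabla\xi$. As in the K\"ahler case, $\widetilde{\nabla}S=0$ together with the explicit form of $S$ should yield $\widetilde{\nabla}\xi=0$; then, using $\zeta^a=0$, $g(\xi,\xi)=0$ and $g(J_a\xi,\xi)=0$, formula \eqref{struct linear type QT} gives
\[
\nabla_X \xi \;=\; g(X,\xi)\,\xi \;-\; \sum_{a=1}^{3} g(X, J_a\xi)\,J_a\xi .
\]
Setting $\theta=g(\cdot,\xi)$ and $\theta_a=g(\cdot,J_a\xi)=-\theta\circ J_a$, this is equivalent to $\nabla\theta = \theta\otimes\theta - \sum_{a=1}^{3}\theta_a\otimes\theta_a$; in particular $\theta$ and the $\theta_a$ are all closed.

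Next, I would convert $\widetilde{\nabla}R=0$ into algebraic identities on $R$. Exactly as in the proofs of parts (2)--(3) of Lemma \ref{Lemma BGO}, combining $\nabla R = S\cdot R$ (with $S$ acting as a derivation on each slot of $R$) with the second Bianchi identity yields
\[
\theta \wedge R(\,\cdot\,,\,\cdot\,,W,U) \;=\; 0, \qquad \forall\, W,U\in\f{X}(M).
\]
Running the same computation with $\xi$ replaced by $J_a\xi$, and invoking the $\upsilon^3$-invariance of the pseudo-quaternion K\"ahler curvature tensor, one should obtain the three additional identities
\[
\theta_a \wedge R(\,\cdot\,,\,\cdot\,,W,U) \;=\; 0, \qquad a=1,2,3.
\]
Now, since $\xi\neq 0$, the quaternionic module $\mathbb{H}\!\cdot\!\xi$ is real four-dimensional, hence $\theta,\theta_1,\theta_2,\theta_3$ are linearly independent 1-forms at each point. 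A 2-form $\omega$ simultaneously divisible by two linearly independent 1-forms $\theta$ and $\theta_a$ is necessarily a scalar multiple of $\theta\wedge\theta_a$; imposing this for all $a=1,2,3$, where $\theta\wedge\theta_1,\theta\wedge\theta_2,\theta\wedge\theta_3$ are themselves linearly independent, forces $\omega=0$. Applied to $\omega=R(\,\cdot\,,\,\cdot\,,W,U)$ this gives $R=0$, so $(M,g,\upsilon^3)$ is flat.

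The main obstacle is the derivation of the companion identities $\theta_a\wedge R=0$ in the second step. In the K\"ahler case these were inherited from $\theta\wedge R=0$ through the substitution $X\mapsto JX$, using the parallelness of $J$ and the symmetry $R(JX,JY,\cdot,\cdot)=R(X,Y,\cdot,\cdot)$. Here, the $J_a$ are not $\nabla$-parallel, since the holonomy of $(M,g,\upsilon^3)$ contains the rotating $Sp(1)$-factor. The argument therefore has to be replaced by a careful treatment of the decomposition $R = \nu R_0 + R^{\f{sp}(p,q)}$ of the pseudo-quaternion K\"ahler curvature, showing each component satisfies the required wedge condition separately; the $\f{sp}(1)$-type piece will be controlled by a quaternion K\"ahler analogue of the Ricci-flatness already encountered in the K\"ahler case, which is itself a consequence of the recurrence $\nabla R = S\cdot R$.
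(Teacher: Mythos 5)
Your overall strategy coincides with the paper's: derive $\theta\wedge R(\cdot,\cdot,W,U)=0$ from $\wnabla R=0$ plus the Bianchi identities, upgrade it to the three companion identities $(\theta\circ J_a)\wedge R(\cdot,\cdot,W,U)=0$ by showing the curvature is of type $\f{sp}(p,q)$, and conclude that a $2$-form divisible by the four pointwise independent $1$-forms $\theta,\theta\circ J_1,\theta\circ J_2,\theta\circ J_3$ must vanish. You also correctly locate the crux in the companion identities and in the fact that the $J_a$ are only parallel up to the rotating $\f{sp}(1)$-part of the connection. The final linear-algebra step and the independence of the four forms (a nonzero $\xi$ spans a real four-dimensional quaternionic line) are fine.

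The gap is in your justification of Ricci-flatness, which is exactly the step that makes $R$ take values in $\f{sp}(p,q)$ and unlocks the companion identities. You attribute it to ``the recurrence $\nabla R=S\cdot R$'', but $\wnabla R=0$ is not by itself a recurrence relation, and even in the K\"ahler case Ricci-flatness was not deduced from the recurrence $\nabla R=4\theta\otimes R$: it followed from the concentration of $R$ on the plane $dw^1\wedge dw^2$, which in turn was deduced from the two wedge identities $\theta\wedge R=0$ and $(\theta\circ J)\wedge R=0$. Reproducing that logic here is circular, since the identities $(\theta\circ J_a)\wedge R=0$ are precisely what you still need to prove and, on your own account, they require the $\f{sp}(p,q)$-type (hence Ricci-flatness) as input. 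The paper closes this loop by a different, direct argument: from $\wnabla\xi=0$ one gets explicit formulas for $\nabla_X\xi$ and for $\nabla_X(J_a\xi)$, the latter involving the $\f{sp}(1)$-connection forms $\widetilde{\tau}^a$, and a computation with these yields $R_{XY}\xi=0$ identically. Since a pseudo-quaternion K\"ahler manifold of dimension at least $8$ is Einstein, tracing $R_{XY}\xi=0$ gives $0=r(X,\xi)=\nu_q\,g(X,\xi)$ for all $X$, whence $\nu_q=0$ because $\xi\neq0$ and $g$ is nondegenerate. Only after this is the manifold locally hyper-K\"ahler, the identity $R_{XYJ_aZW}+R_{XYZJ_aW}=0$ becomes available, and your companion identities and concluding step go through as you describe. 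To complete your proof you would need to supply this computation of $R_{XY}\xi=0$ (or an equivalent independent proof that the reduced scalar curvature vanishes).
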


\begin{proof}
Let $S$ be a strongly degenerate homogeneous pseudo-quaternion
K\"ahler structure of linear type on $(M,g,\upsilon^3)$. From
(\ref{struct linear type QT}) and the third equation in (\ref{AS
qK}) we have
$$\nabla_X\xi=S_X\xi=g(X,\xi)\xi-\sum_{a=1}^ng(X,J_a\xi)J_a\xi,$$
and
$$\nabla_XJ_a\xi  =
\widetilde{\tau}^c(X)J_b\xi-\widetilde{\tau}^b(X)J_c\xi+g(X,J_a\xi)\xi-\sum_{d=1}^3g(X,J_dJ_a\xi)J_d\xi$$
for certain $1$-forms
$\widetilde{\tau}^1,\widetilde{\tau}^2,\widetilde{\tau}^3$ defined
by the parallel property of $\upsilon ^3$, where $(a,b,c)$ is any
cyclic permutation of $(1,2,3)$. Using this formulas and after a
long calculation one obtains
$$R_{XY}\xi=\nabla_{[X,Y]}\xi-[\nabla_X,\nabla_Y]\xi=0.$$
Since $(M,g,\upsilon^3)$ is Einstein we have
$$0=r(X,\xi)=\nu_qg(X,\xi),\qquad X\in\f{X}(M),$$
where $\nu_q$ is one-quarter of the reduced scalar curvature.
Supposing that $\xi\neq 0$ this implies that $\nu_q=0$ and hence
$(M,g,\upsilon^3)$ is Ricci-flat. Therefore the manifold is
locally hyper-K\"ahler and the curvature $R$ is of type
$\f{sp}(p,q)$, i.e., $R_{XYJ_aZW}+R_{XYZJ_aW}=0$ for $a=1,2,3$.

Now, the second equation in (\ref{AS qK}) reads
$$\left(\nabla_XR\right)_{YZWU}=-R_{S_XYZWU}-R_{YS_XZWU}-R_{YZS_XWU}-R_{YZWS_XU},$$
so taking the cyclic sum in $X,Y,Z$ and applying Bianchi
identities, after some computations we get
\begin{eqnarray*}
0 & = &
\Cyclic{\fdS}{\SXYZ}\left\{g(\xi,Y)R_{XZWU}-\sum_{a=1}^3g(\xi,J_aY)R_{J_aXZWU}\right.\\
 & + & g(\xi,Z)R_{YXWU}-\sum_{a=1}^3g(\xi,J_aZ)R_{YJ_aXWU}\\
 & + & g(\xi,W)R_{YZXU}-\sum_{a=1}^3g(\xi,J_aW)R_{YZJ_aXU}\\
 & + & \left.
 g(\xi,U)R_{YZWX}-\sum_{a=1}^3g(\xi,J_aU)R_{YZWJ_aX}\right\}\\
 & = & 2\Cyclic{\fdS}{\SXYZ}g(X,\xi)R_{ZYWU}.
\end{eqnarray*}
This means that $\theta\wedge R_{WU}=0$ with $\theta = \xi
^{\flat}$. But since $R$ is of type $\f{sp}(p,q)$ we also have
$$\left\{\begin{array}{r}
\left(\theta\circ J_1\right)\wedge R_{WU}=0\\
\left(\theta\circ J_2\right)\wedge R_{WU}=0\\
\left(\theta\circ J_3\right)\wedge R_{WU}=0. \end{array}\right.$$
It is easy to see that these equations force $R_{WU}=0$, and hence
$(M,g,\upsilon^3)$ must be flat.
\end{proof}

\bigskip

\begin{definition}
Let $(M,g)$ be a pseudo-Riemannian manifold. A
pseudo-hyper-Hermitian structure is a subset
$\{J_1,J_2,J_3\}\subset \f{so}(TM)$ satisfying
$$J_1^2=J_2^2=J_3^2=-1,\qquad J_1J_2=J_3.$$ In particular $g$ has signature $(4p,4q)$.
\end{definition}

\begin{definition}
A pseudo-Riemannian manifold $(M,g)$ is called
pseudo-hyper-K\"ahler if it admits a parallel
pseudo-hyper-Hermitian structure with respect to the Levi-Civita
connection, or equivalently if the holonomy group of the
Levi-Civita connection is contained in $Sp(p,q)$.
\end{definition}

\begin{definition}
A pseudo-hyper-K\"ahler manifold $(M,g,J_1,J_2,J_3)$ is called a
homogeneous pseudo-hyper-K\"ahler manifold if it admits a
connected Lie group $G$ of isometries acting transitively on $M$
and preserving $J_a$, $a=1,\ldots,3$. $(M,g,J_1,J_2,J_3)$ is
called a reductive homogeneous pseudo-hyper-K\"ahler manifold if
the Lie algebra $\f{g}$ of $G$ can be decomposed as
$\f{g}=\f{h}\oplus\f{m}$ with
$$[\f{h},\f{h}]\subset \f{h},\qquad[\f{h},\f{m}]\subset \f{m}.$$
\end{definition}

It is worth noting that, unlike the Riemannian setting, a
homogeneous and Ricci-flat pseudo-Riemannian manifold is not
necessarily flat. As a corollary of Kiri\v{c}enko's Theorem
\cite{Kir} we have again

\begin{theorem}
A connected, simply connected and (geodesically) complete
pseudo-hyper-K\"ahler manifold $(M,g,J_a)$ is reductive
homogeneous if and only if it admits a linear connection $\wnabla$
satisfying
\begin{equation}\label{AS hK}\wnabla g=0,\qquad \wnabla R=0,\qquad
\wnabla S=0,\qquad \wnabla
J_a=0,\hspace{2mm}a=1,\ldots,3\end{equation} where
$S=\nabla-\wnabla$, $\nabla$ is the Levi-Civita connection and $R$
is the curvature tensor of $\nabla$.
\end{theorem}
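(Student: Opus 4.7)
The plan is to prove the theorem by the same pattern as the two Ambrose--Singer type statements already presented in the excerpt (the pseudo-Riemannian version and the pseudo-K\"ahler version), the only new ingredient being that we must track all three almost complex structures $J_1,J_2,J_3$ simultaneously. Kiri\v{c}enko's original theorem \cite{Kir} asserts that a tensor field on a reductive homogeneous pseudo-Riemannian space is invariant under the transvection group of the canonical connection if and only if it is parallel for that connection; applied to each $J_a$ separately this will yield the pseudo-hyper-K\"ahler statement.

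For the direction (reductive homogeneous) $\Rightarrow$ (existence of $\wnabla$), I would fix the reductive decomposition $\f{g}=\f{h}\oplus\f{m}$ and take $\wnabla$ to be the canonical (Nomizu) connection associated to it. This connection is $G$-invariant and parallelizes every $G$-invariant tensor field on $M$; since $G$ acts by isometries preserving each $J_a$, the tensors $g$, $R$ and $J_a$ are all $G$-invariant, hence $\wnabla$-parallel. The difference $S=\nabla-\wnabla$ of two $G$-invariant connections is itself a $G$-invariant tensor field, so $\wnabla S=0$ as well, which gives all four equations in \eqref{AS hK}.

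For the converse, I would first invoke the extended Ambrose--Singer theorem already quoted in the preliminaries, applied to the three equations $\wnabla g=\wnabla R=\wnabla S=0$. This produces a connected, simply connected Lie group $G$ of isometries acting transitively on $M$, together with a reductive decomposition of $\f{g}$ realizing $\wnabla$ as its canonical connection; the generators of $G$ are the transvections of $\wnabla$, i.e.\ those isometries whose differential at a chosen point $p$ agrees with the $\wnabla$-parallel transport along some curve from $p$. Now the additional hypothesis $\wnabla J_a=0$ means exactly that $\wnabla$-parallel transport carries $J_a|_p$ to $J_a|_q$ for every $q\in M$, so by Kiri\v{c}enko's theorem each transvection preserves $J_a$ pointwise. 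Since this holds for $a=1,2,3$, the group $G$ is a group of automorphisms of the pseudo-hyper-K\"ahler structure, and $(M,g,J_1,J_2,J_3)$ is reductive homogeneous in the sense of the previous definition.

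The delicate step, and the reason one really needs Kiri\v{c}enko's refinement rather than just Ambrose--Singer, is to conclude that the transvections preserve each $J_a$ individually and not merely the bundle $\upsilon^3=\mathrm{span}\{J_1,J_2,J_3\}$ they span, which is the difference between the pseudo-hyper-K\"ahler and the pseudo-quaternion K\"ahler settings. Once each $J_a$ is known to be $\wnabla$-parallel this distinction disappears, so the main obstacle is really bookkeeping rather than new geometric input; the construction is otherwise a verbatim repetition of the pseudo-K\"ahler corollary already stated in the paper.
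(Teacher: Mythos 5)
Your proposal is correct and follows essentially the same route as the paper, which states this result without proof as a direct corollary of Kiri\v{c}enko's theorem \cite{Kir}; your two directions (the canonical Nomizu connection parallelizing all $G$-invariant tensors, and the Ambrose--Singer reconstruction upgraded by $\wnabla J_a=0$ so that the transvection group preserves each $J_a$) are exactly the standard unpacking of that citation. The only cosmetic point is that the final paragraph overstates the delicacy: since the hypothesis is $\wnabla J_a=0$ for each $a$ separately, invariance of each individual $J_a$ is immediate, as you yourself note.
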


A tensor field $S$ satisfying the previous equations is called a
\emph{homogeneous pseudo-hyper-K\"ahler structure}. The
classification of these structures is obtained in \cite{CL},
resulting three primitive classes $\HK_1,\HK_2$, $\HK_3$. Among
them only $\HK_1$ has dimension growing linearly with respect to
the dimension of $M$, hence we call a homogeneous
pseudo-hyper-K\"ahler structure of \emph{linear type} if it
belongs to the class $\HK_1$. The expression of these tensors is
$$S_XY=g(X,Y)\xi-g(Y,\xi)X+\sum_{a=1}^3\left(g(J_aY,\xi)J_aX-g(X,J_aY)J_a\xi\right),$$
where $\xi$ is a vector field. Analogously to the previous cases,
$S$ is called strongly degenerate if $\xi\neq 0$ is isotropic.

Let now $(M,g,J_a)$ be a pseudo-hyper-K\"ahler manifold admitting
a strongly degenerate homogeneous pseudo-hyper-K\"ahler structure
of linear type. Repeating exactly the same computations as in the
pseudo-quaternion K\"ahler case, but knowing a priori that
$\nu_q=0$, we arrive to the conclusion that $(M,g,J_a)$ must be
flat.

\begin{remark}
Note that in the pseudo-K\"ahler case we have proved (see
Proposition \ref{prop hol}) that admitting a strongly degenerate
structure of linear type automatically implies that the manifold
has an integrable $SU(p,q)$ structure. Moreover
$SU(p,q)$-homogeneous structures of linear type have the same
expression as strongly degenerate pseudo-K\"ahler structures. So
the $SU(p,q)$ case is already done. Homogeneous manifolds with
other geometric structures, such as $G_{2(2)}^*$ or $Spin(4,3)$,
can also admit structures of linear type, namely there is a
submodule of the space of homogeneous structures with dimension
growing linearly with the dimension of the manifold. The study of
such structures is open.
\end{remark}

\section*{\small{Acknowledgements}}

\footnotesize{The authors are deeply indebted to Prof. Andrew
Swann and Prof. P.M. Gadea for useful conversations about the
topics of this paper.

\noindent This work has been partially funded by MINECO (Spain)
under project MTM2011-22528.}

\end{document}